\newtheorem{thm}{Theorem}
\newtheorem{defn}{Definition}
\newtheorem{lem}{Lemma}
\newtheorem{cor}{Corollary}
\newtheorem{rem}{Remark}
\newtheorem{prop}{Proposition}
\newcommand{\C}{\mathbb C }
\def\T{\mathcal T}
\def\T2{\mathcal{AV}2}
\def\M{\mathcal M}
\def\P{\mathcal P}
\def\CC{\mathbb C}
\begin{document}

\author{Tao Chen, Yunping Jiang\footnote{Partially supported by PSC-CUNY awards and by a Simons Collaboration grant for Mathematics
and by a grant from the Institute of Mathematics and the Morningside Center of Mathematics
at Chinese Academy of Sciences.}, and Linda Keen\footnote{Partially supported by PSC-CUNY awards.}}

\date{\today}

\title{Bounded Geometry and Families of Meromorphic
Functions with Two Asymptotic Values~\footnote{2000 Mathematics Subject Classification. 37F30, 37F20, 37F10,30F30, 30D30, 30A68.}}

\maketitle
\begin{abstract}
In this paper we study the topological class of universal covering maps from the plane to the sphere with two removed points;  we call the elements   topological transcendental maps with two asymptotic values and we denote the space by $\T2$.
We prove that an element $f \in \T2$  with finite post-singular set
is combinatorially equivalent to a meromorphic transcendental map $g$
with constant Schwarzian derivative if and only if $f$ satisfies an analytic condition we call  {\em bounded geometry}. We plan to  relate the bounded geometry condition to topological conditions such as Levy cycles and Thurston obstructions and to the geometric condition called a canonical Thurston obstruction in a  future paper.
\end{abstract}

\section{Introduction}
\label{sec:intro}

Thurston asked the question ``when can we realize a given branched covering map as a holomorphic map in such a way that the post-critical sets correspond?" and answered it
for post-critically finite degree $d$ branched covers of the sphere~\cite{T,DH}. His theorem is that a postcritically finite degree $d\geq 2$  branched covering of
the sphere, with hyperbolic orbifold, is either combinatorially equivalent
to a rational map or there is a  topological obstruction, now called a ``Thurston obstruction''.  The rational map is unique up to conjugation by a
M\"obius transformation.

Thurston's theorem  is proved by defining an appropriate Teichm\"uller space of rational maps and a holomorphic self map of this space.  Iteration of this map converges if and only if no Thurston obstruction exists.   This method does not naturally extend to transcendental maps because the proof uses the finiteness of both the  degree and the post-critical set in a crucial way. Hubbard, Schleicher, and Shishikura~\cite{HSS} generalized Thurston's theorem to a special infinite degree family they call
``exponential type'' maps. In that paper, the authors study the  limiting behavior of quadratic differentials associated to the exponential
  functions with finite post-singular set. They use a Levy cycle condition (a special case of Thurston's topological condition) to characterize when it is possible to realize a given exponential type map with finite post-singular set as an exponential map by combinatorial equivalence. The main purpose of this paper is to use a different approach based on the framework expounded in~\cite{Ji} (see also~\cite{JZ,CJ}) to understand the characterization problem for a slight generalization of this family of infinite degree maps.

In this paper we define a class of maps called topological transcendental maps with two asymptotic values which we denote by 
$\T2$. The elements in this class are universal covering maps from the plane   to the sphere with two removed points. A meromorphic transcendental map $g$ in $\T2$ is a meromorphic function
with constant Schwarzian derivative and we denote the space of all meromorphic functions with constant Schwarzian derivative by $\M2$.
Our main result in this paper is  the characterization of an $f\in \T2$ combinatorially equivalent to a map $g\in \M2$  in terms of a condition called  {\em bounded geometry}. The main theorem is  

\medskip
\begin{thm}[Main Theorem]
\label{mainthm}
A post-singularly finite map $f$ in $\T2$ is combinatorially equivalent
to a post-singularly finite transcendental meromorphic function $g$ with constant Schwarzian derivative if and only if it has bounded geometry.
The realization is unique up to conjugation by an affine map of the plane.
\end{thm}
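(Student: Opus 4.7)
The plan is to follow the Teichm\"uller-theoretic framework that Thurston used for postcritically finite rational maps and that was extended to infinite-degree settings in~\cite{HSS, Ji, JZ, CJ}. Let $P_f$ denote the (finite) post-singular set of $f$, and let $T_f$ be the Teichm\"uller space of the sphere marked at $P_f$. The map $f$ induces Thurston's pullback operator $\sigma_f : T_f \to T_f$ by pulling back complex structures through $f$, integrating via the measurable Riemann mapping theorem, and re-marking. A fixed point of $\sigma_f$ corresponds, after one tracks the two asymptotic values of $f$ through the construction, to a realization of $f$ by a meromorphic function in $\M2$, because the universal covering property of $f$ forces the resulting holomorphic map to have exactly two asymptotic values and hence constant Schwarzian derivative.

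For the necessity direction, I would start from the hypothetical realization $g \in \M2$. A transcendental meromorphic function with constant Schwarzian derivative is, up to affine pre-composition and M\"obius post-composition, the function $\tan$; in particular $g$ is a universal covering of the sphere minus its two asymptotic values, and its deck group acts on $\CC$ by translations. Consequently small round disks centered at points of $P_g$ pull back along $g$ to a tiling of $\CC$ with uniformly controlled shape, and the hyperbolic configuration of $P_g$ inside $\widehat{\CC}$ minus the asymptotic values is rigid. A combinatorial equivalence from $f$ to $g$ is realized by quasiconformal homeomorphisms that agree on $P_f$ and are isotopic rel $P_f$, and these transport the bounded geometry of $g$ back to $f$.

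The sufficiency direction is the technical heart of the theorem. Starting from a base point $\tau_0 \in T_f$, I would form the orbit $\tau_n = \sigma_f^n(\tau_0)$ and argue that it converges in $T_f$. Because $f$ has infinite degree, $\sigma_f$ is only weakly contracting, and the classical Thurston compactness argument (which uses a compact quotient of $T_f$ by the pure mapping class group) is not available; this is the main obstacle. The bounded geometry condition is engineered precisely to replace this missing compactness by furnishing a priori lower bounds on the hyperbolic distances between consecutive post-singular points in $\tau_n$, together with upper bounds on the complex dilatations of natural quasiconformal representatives. From these bounds I would extract a convergent subsequence $\tau_{n_k} \to \tau_\infty$, verify that $\tau_\infty$ is a fixed point of $\sigma_f$ (using uniform continuity of $\sigma_f$ on bounded-geometry subsets), and read off the meromorphic realization $g \in \M2$. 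Uniqueness up to affine conjugation follows because two realizations in $\M2$ marked by the same post-singular combinatorics differ by a M\"obius transformation on the range and a homeomorphism of the domain; the requirement that the conjugacy preserve the plane and send the two asymptotic values to the two asymptotic values forces the conjugating map to be affine.
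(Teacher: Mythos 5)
There is a genuine gap in the sufficiency direction, and it sits exactly where the paper has to work hardest. You assert that bounded geometry ``furnish[es] a priori lower bounds on the hyperbolic distances between consecutive post-singular points \dots\ together with upper bounds on the complex dilatations,'' and that this lets you extract a convergent subsequence $\tau_{n_k}\to\tau_\infty$ in $T_f$. Bounded geometry does no such thing by itself: it controls the spherical separation of the marked configurations $P_n=w^{\mu_n}(P_f)$, but it does not control the markings. Concretely, writing the induced holomorphic maps $g_n=w^{\mu_n}\circ f\circ(w^{\mu_{n+1}})^{-1}=g_{\alpha_n,\beta_n}\in\M_2$, separation of the points of $P_n$ bounds $|\alpha_n|$ and $|\alpha_n-1|$ and bounds $|\beta_n|$ from \emph{below}, but nothing in the bounded geometry hypothesis prevents $\beta_n\to\infty$ (unbounded twisting of the marking), in which case the $g_n$ leave every compact subset of $\M_2$ and the orbit $\{\tau_n\}$ has no convergent subsequence in $T_f$. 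The paper closes this hole with an additional \emph{topological} invariant: the winding number $\eta$ of the closed curve $\delta=f(\gamma)$, where $\gamma$ joins two post-singular points with the same image; Lemma~\ref{winding} shows $\eta$ is preserved by the Thurston iteration, and the identity $\eta=\frac{\beta_n}{2\pi i}(c_{k_{2},n+1}-c_{k_{1},n+1})$ together with the bounded-geometry lower bound $|c_{k_{2},n+1}-c_{k_{1},n+1}|\ge k$ gives the crucial upper bound $|\beta_n|\le \eta/(2\pi k)$. Your proposal contains no substitute for this constraint, so the compactness step---the technical heart you correctly identify---is missing.

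A second, smaller issue: even granting a subsequential limit, ``uniform continuity of $\sigma_f$ on bounded-geometry subsets'' is not how one obtains a fixed point, because $\sigma_f$ is a priori only weakly contracting (Lemma~\ref{contractive}). The paper's route is to prove strict contraction of the coderivative on integrable quadratic differentials (Lemma~\ref{infstrongcon}), and then to use compactness of the $g_n$ in $\M_2$ and of the extremal differentials (whose poles lie in uniformly separated configurations) to get a uniform factor $a<1$ along the orbit, whence $d_T(\tau_{n+1},\tau_n)\le a\,d_T(\tau_n,\tau_{n-1})$ and the whole sequence is Cauchy; uniqueness of the realization also falls out of this strict contraction rather than from your marking argument. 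Your necessity sketch is serviceable but roundabout: once $f$ is equivalent to some $g\in\M_2$, $\sigma_f$ has a fixed point $\tau_0$, the orbit is constant, and the systole of the single hyperbolic surface $\hat{\CC}\setminus P_f$ with the structure $\tau_0$ already gives the required lower bound; no transport of a ``bounded geometry of $g$'' (which is not even defined independently of the orbit in $T_f$) is needed.
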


Our techniques involve adapting the Thurston iteration scheme to our situation.  We work with a fixed normalization.  
There are two important parts to the proof of the main theorem.  The first part is to prove that  the bounded geometry 
condition implies the iterates remain in a compact subset of the Teichm\"uller space. This analysis depends on defining 
a topological condition that constrains the iterates. The second part is to use the compactness of the iterates to prove  
that  the iteration scheme converges in the Teichm\"uller space. This part of the proof involves an analysis of quadratic differentials associated to our functions.  

The paper is organized as follows.
In \S2 we review the properties of  meromorphic functions with two asymptotic values that constitute the space $\M_2$.
In \S3, we define the family $\T2$ that consists of  topological maps modeled  on maps in $\M_2$
and show that $\M_2 \subset \T2$.
In \S4 we define {\em combinatorial equivalence} between maps in $\T2$ and
in \S5 define the Teichm\"uller space $\mathcal{T}_f$ for a map $f \in \T2$.
In \S6, we introduce the induced map $\sigma_{f}$ from the Teichm\"uller space $T_{f}$ into itself;  this is the map that defines the Thurston iteration scheme.
In \S7, we define the concept of  {\em bounded geometry}  and in \S8 we prove the necessity of the bounded geometry condition in the main theorem.
 In \S9, we give the proof of the sufficiency assuming the iterates remain in a compact subset of $T_{f}$.
In \S10.1, we define  a topological property   of the post-singularly finite map $f$  in $\T2$ in terms of the winding number of a certain closed curve. We prove that the winding number is unchanged during iteration of the map $\sigma_f$ and so provides a topological constraint on the iterates.  Finally,   in \S10.2, we show how the bounded geometry condition  together with this topological constraint implies the functions remain in a compact subset of $T_{f}$ under  the iteration to complete the proof of the main theorem.

 \medskip
{\bf Acknowledgement:} The second and the third authors are partially supported by PSC-CUNY awards. The second author is also partially supported by the collaboration grant (\#199837) from the Simons Foundation and the CUNY collaborative incentive research grant (\#1861). This research is also partially supported by the collaboration grant (\#11171121) from the NSF of China
and a collaboration grant from Academy of Mathematics and Systems Science and the Morningside
Center of Mathematics at the Chinese Academy of Sciences.

\section{The Space $\M_2$} \label{sec:merom}
In this section we define the space of meromorphic functions $\M_2$.
It is the model for the  more general space of topological functions $\T2$ that we define in the next section.
We need some standard notation and definitions:\\

\noindent ${\mathbb C}$ is the complex plane, $\hat{\mathbb C}$ is the Riemann sphere and $\C^*$ is complex plane punctured at the origin.

\begin{defn}
 Given a meromorphic function $g$, the point $v$ is a {\em logarithmic singularity for the map $g^{-1}$}
 if there is a neighborhood $U_v$ and a component $V$ of $g^{-1}(U_v \setminus \{v\}) $ such that the map
 $g: V \rightarrow U_v \setminus \{v\}$ is a holomorphic universal covering map. The point $v$ is also
 called an {\em asymptotic value } for $g$ and $V$ is called an {\em asymptotic tract for $g$}.
 A point may be an asymptotic value for more than one asymptotic tract.
 An asymptotic value may be an omitted value.
 \end{defn}

\begin{defn}  Given a meromorphic function  $g$, the point $v$ is an {\em algebraic singularity
for the map $g^{-1}$} if there is a neighborhood $U_v$ such that for every component $V_i$
of $g^{-1}(U_v ) $ the map $g: V_i \rightarrow U_v $ is a degree $d_{V_i}$ branched covering map
and $d_{V_i}>1$ for finitely many components $V_{1}, \ldots V_{n}$. For these components,
if $c_i \in V_i$ satisfies $g(c_{i})=v$ then $g'(c_i)=0$;
that is $c_i$ is a {\em critical point of $g$} for $i=1, \ldots, n$ and $v$ is a {\em critical value}.
\end{defn}

Note that by a theorem of Hurwitz, if a meromorphic function is not a homeomorphism,
it must have at least two singular points  (i.e., critical points and asymptotical values) and, by the big Picard theorem,
no transcendental meromorphic function $g:\C \rightarrow \hat\C$ can omit more than two values.

The space $\M_2$ consists of meromorphic functions whose only singular values are its omitted values.
More precisely,

\begin{defn}
The space $\M_2$ consists of meromorphic functions $g:\C \rightarrow \hat\C$
with exactly two asymptotic values and no critical values.
\end{defn}

\subsection{Examples}

Examples of functions in $\M_2$ are the exponential functions $\alpha e^{\beta z}$
and the tangent functions $\alpha \tan {i\beta z}=i\alpha \tanh{\beta z}$
where $\alpha,\beta$ are complex constants.

The asymptotic values for  the exponential functions
above are $\{0,\infty\}$;  the half plane $\Re{ \beta z} < 0$ is an asymptotic tract for $0$
and the half plane $\Re{ \beta z} >0$ is an asymptotic tract for infinity.
The asymptotic values for the tangent functions above are $\{\alpha i, -\alpha i\}$
and the asymptotic tract for $\alpha i$ is the half plane $\Im{ \beta z} >0$ while
the asymptotic tract for $-\alpha i$ is the half plane $\Im{ \beta z } < 0$.

\subsection{Nevanlinna's Theorem}

To find the form of the most general function in $\M_2$ we use a special case of a theorem of Nevanlinna~\cite{Nev}.

\begin{thm} [Nevanlinna]
Every meromorphic function $g$ with exactly $p$ asymptotic values and no critical values
has the property that its Schwarzian derivative is a polynomial of degree $p-2$.  That is
\begin{equation} \label{SCH} S(g)= \big(\frac{g''}{g'}\big)' - \frac{1}{2}\big(\frac{g''}{g'}\big)^2 = a_{p-2}z^{p-2} + \ldots a_1 z + a_0. \end{equation}
Conversely, for every polynomial $P(z)$ of degree $p-2$,  the solution to
the Schwarzian differential equation $S(g)=P(z)$ is a meromorphic function with $p$ asymptotic values and no critical values.
\end{thm}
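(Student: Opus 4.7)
The plan is to organize both directions around the classical correspondence between meromorphic functions with no critical points and ratios of linearly independent solutions of a second-order linear ODE $u'' + \tfrac12 Q(z)\, u = 0$: if $g = u_1/u_2$ then a direct computation gives $S(g) = Q$, and conversely every $g$ with prescribed Schwarzian arises this way up to post-composition with a M\"obius transformation. The Wronskian $W = u_1 u_2' - u_1' u_2$ is constant because the ODE has no first-derivative term, so $g' = -W/u_2^2$ vanishes nowhere; this is the ODE-theoretic incarnation of ``no critical values''.

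For the forward direction I would first show that $S(g)$ is entire. The only candidate poles of $S(g)$ sit at poles of $g$, and each such pole must be simple (otherwise $g$ would branch over $\infty$, contradicting the no-critical-value hypothesis in its branched-cover reading). A short local expansion at a simple pole then shows that the $(z-p)^{-2}$ singular terms in $(g''/g')'$ and $\tfrac12 (g''/g')^2$ cancel exactly, so $S(g)$ extends holomorphically. To upgrade ``entire'' to ``polynomial of degree $p-2$'' I would analyze the behavior of $g$ in each asymptotic tract: from the universal-cover description of $g$ over a punctured disk around an asymptotic value one reads off an asymptotic normal form for $g$ in a sector at infinity, differentiates to get a normal form for $S(g)$, and assembles these across the Stokes sectors to conclude the required polynomial growth and exclude extra asymptotic directions.

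For the converse, given $P$ of degree $p-2$, take entire linearly independent solutions $u_1, u_2$ of $u'' + \tfrac12 P u = 0$ (entire because $P$ is) and set $g = u_1/u_2$, so that $g$ is meromorphic on $\CC$, $S(g) = P$, and the Wronskian argument rules out critical values. The remaining step, which is the main technical obstacle, is the count of asymptotic values: at $\infty$ the ODE has an irregular singular point, and classical Stokes-sector theory partitions a deleted neighborhood of $\infty$ into exactly $p$ sectors, in each of which there is a distinguished fundamental system with WKB-type exponential asymptotics. One then shows that $g$ has a well-defined limit along any ray into a given sector, and that distinct sectors yield distinct limits, producing exactly $p$ asymptotic values and $p$ asymptotic tracts. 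This Stokes analysis at infinity is what ties the algebraic datum $\deg P = p-2$ to the geometric datum ``$p$ asymptotic values'' in both directions, and is the nontrivial ingredient imported from the theory of irregular singular points of linear ODEs.
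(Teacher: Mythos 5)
The paper itself offers no proof of this statement (it is quoted from Nevanlinna's book), so your sketch has to be measured against the classical argument, and there it has two genuine problems. The more serious one is in the forward direction, at the step from ``$S(g)$ is entire'' to ``$S(g)$ is a polynomial of degree $p-2$.'' An asymptotic tract is, a priori, only an unbounded simply connected domain on which $g$ restricts to a universal covering of a punctured disk; nothing in that description says the tract is asymptotically a sector at infinity, or that $g$ admits a WKB-type normal form there. The sector structure of the tracts is precisely what the theorem delivers, so invoking ``Stokes sectors'' of $g$ at this stage is circular, and ``assembling normal forms to conclude polynomial growth'' has no mechanism behind it. The real content here is a growth/type argument: either a Denjoy--Carleman--Ahlfors-type estimate showing that a meromorphic function without critical points and with only $p$ transcendental singularities of the inverse has order $p/2$, or, as Nevanlinna argues, the solution of the type problem for the simply connected surface with $p$ logarithmic branch points spread over the sphere: that surface is parabolic, it is uniformized by the function $G$ with polynomial Schwarzian constructed in the converse direction, and since $g$ uniformizes the same surface one gets $g=G\circ A$ with $A$ affine, whence $S(g)$ is a polynomial of degree $p-2$. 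Without some ingredient of this kind your forward direction does not close, even after the (correct) observations that poles are simple and $S(g)$ is entire.

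The second problem is in your converse: the claim that distinct Stokes sectors yield distinct limits is false. Take $g(z)=\int_0^z e^{-t^2}\,dt$; then $g''/g'=-2z$ and $S(g)=-2-2z^2$ has degree $2$, so there are four tracts, but the asymptotic values are $\sqrt{\pi}/2$, $-\sqrt{\pi}/2$ along the two real directions and $\infty$ along both imaginary directions --- four tracts, only three distinct values. Adjacent sectors do give distinct values (the recessive solution changes from sector to sector), but non-adjacent sectors may share a value. The correct conclusion of the construction, and the form in which Nevanlinna states it, is that $g^{-1}$ has exactly $p$ logarithmic singularities, i.e.\ $p$ asymptotic tracts, with asymptotic values counted per tract (consistent with the paper's remark that a point may be an asymptotic value for more than one tract); in the paper's application $p=2$ and the two values are genuinely distinct, but your general count as stated is wrong and the proof should be rephrased in terms of tracts.
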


It is easy to check that $S(\alpha e^{\beta z} )= -\frac{1}{2}\beta^2$ and $S(\alpha \tan{\frac{i\beta}{2}}{z})=-\frac{1}{2}\beta^2$.

To find all functions in $\M_2$, let $\beta \in \CC$ be constant and consider the Schwarzian differential equation
\begin{equation}\label{eqn:Schwarzian}
 S(g)=-\beta^2/2 \end{equation}
and the related  second order linear differential equation
 \begin{equation}\label{eqn:Ricci}
 w''+\frac{1}{2}S(g)w= w'' -\frac{\beta^2}{4}  w = 0. \end{equation}
It is straightforward to check that if  $w_1, w_2$ are linearly independent solutions to  equation~(\ref{eqn:Ricci}),  then $g_{\beta}
=w_2/w_1$ is a solution to equation~(\ref{eqn:Schwarzian}).

Normalizing so that $w_1(0)=1,w_1'(0)=-1, w_2(0)=1, w_2'(0)=1$ and
solving equation~(\ref{eqn:Ricci}), we have $w_1=e^{- \frac{\beta}{2} z}, w_2=e^{\frac{\beta}{2}{z}}$ as linearly independent solutions and $g_{\beta}(z)=e^{\beta z}$ as the solution to  equation~(\ref{eqn:Schwarzian}).
An arbitrary solution to equation~(\ref{eqn:Schwarzian})
 then has the form \begin{equation}\label{eqn:gensoln}
 \frac{Aw_2 +Bw_1}{Cw_2+Dw_1},  \, \, A,B,C,D \in \hat{\C}, \, AD-BC= 1  \end{equation}
and its  asymptotic values are $\{A/C,B/D\}$.

\begin{rem}
The asymptotic values are distinct and omitted.
\end{rem}

\begin{rem}
If $B=C=0, AD=1, A=\sqrt{\alpha}$ we obtain the exponential family $\{\alpha e^{\beta z}\}$  with asymptotic values at $0$ and $\infty$.  If $A=-B=\sqrt{\frac{\alpha i}{2}}, \ C=D=\sqrt{-\frac{i}{2\alpha}}$ we obtain the tangent family  $\{\alpha \tan {\frac{i\beta}{2} z}\}$ whose asymptotic values $\{ \pm \alpha i\}$ are symmetric with respect to the origin.
\end{rem}

\begin{rem}\label{rmk3}
Note that in the solutions of $S(g)= - \beta^2/2$  what appears is
$e^{\beta}$, not $\beta$;  this creates an ambiguity about which branches
of the logarithm of $e^{\beta}$ correspond to the  solution of equation~(\ref{eqn:Schwarzian}).
 In section~\ref{sec:topconst} we address this ambiguity in our situation.   We show that the topological map  we start with determines a  topological constraint which in turn, defines  the appropriate branch of the logarithm for each of the iterates in our iteration scheme.
\end{rem}

\begin{rem}
One of the basic features of the Schwarzian derivative is that it satisfies the following cocycle relation:
if $f,g$ are meromorphic functions then
$$
S(g \circ f(z)) = S(g(f)) f'(z)^2 + S(f(z)).
$$
In particular, if $T$ is a M\"obius transformation,
$S(T(z))=0$ and $S(T\circ g(z) )= S((g(z))$ so that post-composing by $T$ doesn't change the Schwarzian.
\end{rem}

In our dynamical problems  the point at infinity plays a special role and the dynamics
are invariant under post-composition by an affine map.
Thus, we may assume that all the solutions have one asymptotic
value at $0$ and that they take the value $1$ at $0$.

Since this is true for $g_{\beta}(z)=e^{\beta z}$, any solution with this normalization has the form \footnote{Notice that $g_{\alpha,\beta}$ is obtained from $g_{\beta}$ by a M\"obius transformation with determinant $1$.} 
\begin{equation}\label{eqn:gennormal}
g_{\alpha,\beta}(z)= \frac{\alpha g_{\beta}(z)}{(\alpha-\frac{1}{\alpha})g_{\beta}(z) + \frac{1}{\alpha}}
\end{equation}
where $\alpha$ is an arbitrary value in $\CC^*$. The second asymptotic value is
$\lambda=\frac{\alpha}{\alpha-\frac{1}{\alpha}}$. It takes values in $\CC \setminus \{0,1\}$.
The point at infinity is an essential singularity for all these functions.

\medskip
The parameter space $\P$ for these functions is the two complex dimensional space
$$
\P=\{\alpha, \beta \in  \CC^*  \}.
$$
The parameters define a natural complex structure for the space  $\M_2$.
The subspace of {\em entire}  functions in $\M_2$ is the one dimensional subspace
of $\P$ defined by fixing $\alpha=1$ and varying $\beta$;
$$
g_{\beta}(z)=e^{\beta z}.
$$
The tangent family has symmetric asymptotic values.
Renormalized, it forms  another one dimensional subspace of $\P$.
This is defined by fixing $\alpha = \sqrt{2}$ and varying $\beta$;
$$
g_{\frac{1}{\sqrt{2}},\beta}(z)=1+\tanh{\frac{\beta}{2} z}= \frac{\sqrt{2} e^{\beta z}}{\frac{1}{\sqrt{2}}e^{\beta z}+\frac{1}{\sqrt{2}}}.
$$
These functions have asymptotic values at $\{0,2\}$ and $g_{\frac{1}{\sqrt{2}},\beta}(0)=1.$

\begin{defn}
For $g_{\alpha,\beta}(z) \in \M_2$, the set  $\Omega=\{0,\lambda\}$ of asymptotic values is the set of singular values.
The {\em post-singular set} $P_{g}$ is defined by
$$
P_{g} = \overline{ \bigcup_{x \in \Omega} \cup_{n \geq 0} g^{n}(x) } \cup \{\infty\}.
$$
\end{defn}

Note that we include the point at infinity separately in $P_g$ because
whether or not it is an asymptotic value, it is an essential singularity
and its forward orbit is not defined.
The asymptotic values are in $P_g$ and,
since $0$ and $\lambda$ are omitted and $g_{\alpha,\beta}(0) =1 \in P_g$, $\#P_g \geq 3$.

\section{The Space $\T2$ }
We now want to consider the topological structure of functions in $\M_2$
and define $\T2$ to be the set of maps with the same topology.

\begin{defn}
Let $X$ be a simply connected open surface and let $S^2 $ be the 2-sphere.
Let $f_{a,b}:X \rightarrow S^2 \setminus \{a,b\}$ be an unbranched covering map;
that is, a universal covering map.
If $Y$ is also a simply connected open surface we say the pair $(X,f^1_{a,b})$ is equivalent
to the pair $(Y, f^2_{c,d})$ if and only if there is a homeomorphism $h:X \rightarrow Y$
such that $f^2_{c,d} \circ h= f^1_{a,b}$.
An equivalence class of such classes is called a {\em 2-asymptotic value map }
and the space of these pairs is denoted by $\T2$.
\end{defn}

Let $(X,f_{a,b})$ be a representative of a map in $\T2$.
By abuse of notation, we will  often suppress the dependence
on the equivalence class and identify $X$ with $S^2 \setminus \{\infty\}$
and refer to $f_{a,b}$ as an element of $\T2$.

By definition $f_{a,b}$ is a local homeomorphism and satisfies the following conditions:

For $v=a$ or $v=b$,  let $U_v \subset X$ be a neighborhood of $v$ whose boundary is a simple closed curve
that separates $a$ from $b$ and contains $v$ in its interior.

\begin{enumerate}
\item $f_{a,b}^{-1}(U_v \setminus \{v\})$ is connected and simply connected.
\item The restriction $f_{a,b}: f_{a,b}^{-1}(U_v \setminus \{v\}) \rightarrow (U_v \setminus \{v\})$ is a regular covering of a punctured topological disk whose degree is  infinite.
\item   $f^{-1}( \partial U_v)$ is an open curve extending to infinity in both directions.
\end{enumerate}
In analogy with meromorphic functions we say

\begin{defn}
$v$ is called a {\em logarithmic singularity} of $f_{a,b}^{-1}$ or, equivalently, an {\em asymptotic value} of $f_{a,b}$.
The domain $V_v=f_{a,b}^{-1}(U_v \setminus \{v\})$ is called an {\em asymptotic tract for $v$}.
\end{defn}

\begin{defn}
{\em  $\Omega_f=\{a,b\}$ is the set of singular values of $f_{a,b}$}.
\end{defn}

Endow $S^2$ with the standard complex structure so that it is identified with $\hat\CC$.
By the classical uniformization theorem, for any pair $(X,f_{a,b})$,
there is a  map $\pi: \CC \rightarrow  X$  such that $g_{a,b} = f_{a,b} \circ \pi $ is meromorphic.
It is called {\em the meromorphic function associated to $f_{a,b}$}.

By Nevanlinna's  theorem $S(g(z))$ is constant and moreover,

\begin{prop}\label{prop2}
If  $g(z) \in \M_2 $ with $\Omega_g=\{a,b\}$ then $g(z)=g_{a,b}(z)  \in  \T2$ and, conversely,
if $g_{a,b} \in \T2$ is meromorphic then $g_{a,b} \in \M_2$.
\end{prop}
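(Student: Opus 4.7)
The plan is to unpack both directions from the definitions, using the fact that the ``singular values'' of a meromorphic function are exactly the obstructions to being a covering map.

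For the forward direction, suppose $g \in \M_2$ with $\Omega_g = \{a,b\}$. From the explicit form $g_{\alpha,\beta}$ given in equation~(\ref{eqn:gennormal}) (and the remark that asymptotic values of these solutions are omitted), the values $a$ and $b$ are not in the image of $g$, so $g$ defines a holomorphic map $\C \to \hat\C \setminus \{a,b\}$. Since $g$ has no critical values it has no critical points, so $g$ is a local homeomorphism. I would then argue that $g$ is in fact an unbranched covering of $\hat\C \setminus \{a,b\}$: any point $w \in \hat\C\setminus\{a,b\}$ is neither a critical value nor an asymptotic value, and a standard argument (any path in $\hat\C\setminus\{a,b\}$ lifts completely through $g$, since the only obstructions to path-lifting for a local homeomorphism from $\C$ arise at critical or asymptotic values) produces an evenly covered neighborhood of $w$. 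Because $\C$ is simply connected, this covering is the universal cover. Finally, to check conditions (1)--(3) of the definition of $\T2$, I would take small Jordan neighborhoods $U_v$ around $v \in \{a,b\}$: the preimage of $U_v\setminus\{v\}$ has a component $V_v$ on which $g$ restricts to the holomorphic universal cover associated to the logarithmic singularity at $v$, and that component is simply connected with infinite degree of covering over the punctured disk (its fundamental group is $\ZZ$), giving an asymptotic tract with the required boundary behavior.

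For the converse, let $g_{a,b} \in \T2$ be meromorphic. By definition of $\T2$, the map $g_{a,b}: \C \to \hat\C\setminus\{a,b\}$ is an unbranched universal covering, in particular a local homeomorphism. Hence $g_{a,b}$ has no critical points, so no critical values. The points $a$ and $b$ are logarithmic singularities of $g_{a,b}^{-1}$ by conditions (1)--(3) in the definition of $\T2$, so they are asymptotic values in the meromorphic sense. To finish I must rule out any further asymptotic value $v'$. Any asymptotic value of $g_{a,b}$ that is omitted would give a third omitted value in $\hat\C$, contradicting the big Picard theorem for transcendental meromorphic functions recalled after the definitions. If instead $v' \in \hat\C\setminus\{a,b\}$ were a non-omitted asymptotic value, then there would exist a punctured neighborhood of $v'$ one of whose components $V'$ maps as a universal cover onto the punctured neighborhood; but this contradicts the evenly covered neighborhood property of $v'$ as a point in the base of the unbranched covering $g_{a,b}:\C \to \hat\C\setminus\{a,b\}$. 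Thus $\Omega_{g_{a,b}} = \{a,b\}$ and $g_{a,b} \in \M_2$.

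The main obstacle is the forward direction: turning the pointwise information ``no critical values and only $\{a,b\}$ as asymptotic values'' into the global statement that $g:\C \to \hat\C\setminus\{a,b\}$ is a covering map rather than merely a surjective local homeomorphism. The cleanest route is via the path-lifting criterion combined with the explicit description of the asymptotic tracts coming from the logarithmic singularities; everything else is a straightforward bookkeeping of the definitions, with Nevanlinna and big Picard doing the heavy lifting.
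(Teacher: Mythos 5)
Your proof is correct and follows essentially the same route as the paper, whose proof of Proposition~\ref{prop2} simply observes that any $g\in\M_2$ is a universal cover of $\hat\C\setminus\Omega_g$ (hence lies in $\T2$) and that a meromorphic element of $\T2$, being an unbranched universal covering onto the twice-punctured sphere, has only its two omitted values as singular values; you have just filled in the covering-space details the paper leaves implicit. For the forward direction you could shortcut the path-lifting argument by invoking Nevanlinna's theorem directly: every $g\in\M_2$ has constant Schwarzian and so, by equation~(\ref{eqn:gensoln}), is a M\"obius transformation post-composed with $e^{\beta z}$, which makes the universal covering property of $g:\C\to\hat\C\setminus\{a,b\}$ immediate.
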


\begin{proof}
Any $g(z) \in \M_2$ is a universal cover $g:\CC \rightarrow \hat\CC \setminus \Omega_g$
and so belongs to $\T2$.
Conversely, if $g_{a,b} \in \T2$, it is meromorphic and its only singular values are the omitted values $\{a,b\}$;
it is thus in $\M_2$.
\end{proof}

We define the post-singular set for functions in $\T2$ just as we did for functions in $\M_2$.

 \begin{defn}
 For $f=f_{a,b} \in \T2$, the {\em post-singular set} $P_{f}$ is defined by
 $$
 P_{f} = \overline{ \bigcup_{n \geq 0} f^{n}(\Omega_{f}) } \cup \{\infty\}
 $$
\end{defn}

Note that under the identification of $S^2$ with the Riemann sphere and $X$ with the complex plane,
$S^2 \setminus X$ is the point at infinity and it has no forward orbit although it may be an asymptotic value.
We therefore include it in $P_f$.

Post-composition of $f_{a,b}$ with an affine transformation $T$ results in another map in $\T2$.
In what follows, therefore, we will always assume $a=0$ and the second asymptotic value, $\lambda$,
depending on $T$ and $b$, is determined by the condition $f(0)=1$.

We will be concerned only with functions in $\T2$ such that $P_f$ is finite.
Such functions are called {\em post-singularly finite}.

\section{Combinatorial Equivalence}

In this section we define combinatorial equivalence for functions in $\T2$. Choosing representatives $(X,f_{a,b})$ of the $\T2$-equivalence classes,
we may assume $X$ is always $S^2 \setminus \{\infty \}$ and $\{0,b\}$ are the singular points for all the functions
so we will omit the subscripts denoting the omitted points in the definitions below.

\begin{defn}
Suppose $(X,f_{1}), (X,f_2) $ are representatives of  two post-singularly  finite  functions  in $\T2$, chosen as above. We say that they are {\em combinatorially equivalent}  if there are two
homeomorphisms $\phi$ and $\psi$ of   $S^2 $ onto itself fixing $\{0, \infty\}$ such that $\phi\circ f_2=f_1\circ \psi$ on $X$ and $\phi^{-1}\circ \psi$ is isotopic to the identity of $S^2$ rel $P_{f_1}$.
\end{defn}
The commutative diagram for the above definition is
\begin{equation*}
\xymatrix{X\ar[r]^\psi\ar[d]^{f_1} & X\ar[d]^{f_2}\\
S^2 \ar[r]^\phi & S^2}
\end{equation*}

\section{Teichm\"uller Space $T_{f}$.}

Let ${\mathbf M}=\{ \mu \in L^{\infty} (\hat\C)\;|\; \|\mu\|_{\infty}<1\}$ be the unit ball in the space
of all measurable functions on the Riemann sphere. Each element $\mu\in {\mathbf M}$ is called a Beltrami coefficient.
For each Beltrami coefficient $\mu$, the Beltrami equation,
$$
w_{\overline{z}}=\mu w_{z}
$$
has a unique quasiconformal  solution $w^{\mu}$ which maps $\hat{\mathbb C}$  to itself and fixes $0,1, \infty$.
Moreover, $w^{\mu}$ depends holomorphically  on $\mu$.

Let $f$ be a post-singularly finite function in $\T2$  with singular set $\Omega_f=\{0, \lambda\}$ and postsingular set $P_f$.     By definition, we have $\#(\Omega_f) = 2$ and $\#(P_f) >2$.     Since post-composition by an affine map is in the equivalence class of $f$ we may always choose a representative  such that $\{f(0)=1\} \subset P_f$;  we assume we have always made this choice.  It follows that $\lambda \neq 1$ so we always have $\{0,1, \lambda, \infty\} \subset P_f$.

The Teichm\"uller space $T( P_f)$ is defined as follows.  Given Beltrami differentials   $\mu, \nu \in {\mathbf M}$  we say that $\mu$ and $\nu$ are equivalent,  and denote this by  $\mu\sim \nu$, if $w^{\mu}$ and $w^{\nu}$ fix $0, 1, \infty$ and $(w^{\mu})^{-1}\circ w^{\nu}$ is isotopic to the identity map of $\hat\C$ rel $P_f$.   We set  $T_f=T( P_f)= {\mathbf M}/ \sim \, =\{[\mu]\}$.

There is an obvious isomorphism between  $T_{f}$ and  the classical Teichm\"uller space $Teich(R)$ of Riemann surfaces with  basepoint $R=\hat{\mathbb C}\setminus P_{f}$.  It follows that  $T_{f}$
is a finite-dimensional complex manifold so that the Teichm\"uller distance $d_{T}$ and the Kobayashi distance $d_{K}$ on $T_{f}$ coincide.  It also follows that there are always  locally quasiconformal maps in the equivalence class of $f$;  we always assume we have chosen one such as our representative.

\section{Induced Holomorphic Map $\sigma_{f}$.}
For any post-singularly finite $f$ in $\T2$, there is an induced  map $\sigma= \sigma_{f}$ from $T_{f}$ into itself given by:
$$
\sigma([\mu]) =[f^{*}\mu],
$$
where
\begin{equation}~\label{pullbackformula}
\tilde{\mu}(z) =f^{*}\mu(z) = \frac{\mu_f(z) + \mu((f(z))
\theta(z)}{1 + \overline{\mu_f (z)} \mu(f(z)) \theta(z)},  \quad \mu_f=\frac{f_{\bar z}}{f_{z}},   \quad \theta(z) =\frac{\overline{f_{z}}}{f_{z}}.
\end{equation}
Because $\sigma$ is a holomorphic map we have

\begin{lem}~\label{contractive}
For any two points $\tau$ and $\tau'$ in $T_{f}$,
$$
d_{T}\Big(\sigma(\tau), \sigma(\tau')\Big)\leq d_{T}(\tau, \tau').
$$
\end{lem}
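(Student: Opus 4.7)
The plan is to deduce the contracting property from the general Schwarz--Pick-type principle that the Kobayashi metric is non-increasing under holomorphic maps between complex manifolds, combined with Royden's theorem that on a finite-dimensional Teichm\"uller space the Kobayashi metric coincides with the Teichm\"uller metric. Both ingredients have been set up in the paragraph preceding the lemma: it has already been noted that $T_{f}$ is a finite-dimensional complex manifold on which $d_{T}=d_{K}$, and the word ``holomorphic'' has been attached to $\sigma_{f}$ in the statement.

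First I would verify that $\sigma_f$ is genuinely holomorphic. Given a representative $\mu$ of $\tau\in T_f$, the pullback formula
\[
\tilde\mu(z)=f^{*}\mu(z)=\frac{\mu_{f}(z)+\mu(f(z))\,\theta(z)}{1+\overline{\mu_{f}(z)}\,\mu(f(z))\,\theta(z)}
\]
shows that $\tilde\mu\in{\mathbf M}$ depends holomorphically on $\mu\in{\mathbf M}$ at each point $z$, and the $L^{\infty}$-bound is uniform in the parameter; hence $\mu\mapsto\tilde\mu$ is a holomorphic self-map of the unit ball ${\mathbf M}$. Composing with the holomorphic projections ${\mathbf M}\to T_{f}$ (which exist because $w^{\mu}$ depends holomorphically on $\mu$ and equivalence in $T_{f}$ is cut out by a holomorphic condition), one sees that $\sigma_{f}$ descends to a well-defined holomorphic self-map of the complex manifold $T_{f}$.

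Next, by the definition of the Kobayashi distance $d_{K}$ as the largest pseudodistance dominated by the Poincar\'e metric of the disc under holomorphic maps, any holomorphic map $\Phi\colon M\to N$ between complex manifolds satisfies $d_{K}^{N}(\Phi(p),\Phi(q))\le d_{K}^{M}(p,q)$. Applying this to $\Phi=\sigma_{f}$ with $M=N=T_{f}$ yields
\[
d_{K}\bigl(\sigma(\tau),\sigma(\tau')\bigr)\le d_{K}(\tau,\tau').
\]
Finally, by Royden's theorem, recalled in the previous section, $d_{K}=d_{T}$ on $T_{f}$, and the desired inequality follows immediately.

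The only potential obstacle is the verification that $\sigma_f$ is holomorphic in the sense required for Kobayashi contraction; this is routine but requires a careful check that the pullback formula depends holomorphically on $\mu$ in the Banach-space sense, and that passage to the quotient $T_{f}={\mathbf M}/\!\sim$ preserves holomorphicity. Both of these are standard facts in the theory of holomorphic motions and Teichm\"uller theory, and no special feature of $\T2$ is used beyond the fact that $f$ is locally quasiconformal (as may be arranged by the choice of representative in its equivalence class, already invoked at the end of Section~5).
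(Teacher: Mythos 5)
Your proposal is correct and is exactly the argument the paper has in mind: the paper simply remarks ``Because $\sigma$ is a holomorphic map we have'' the lemma, relying on the earlier observation that $d_{T}=d_{K}$ on the finite-dimensional manifold $T_{f}$ together with the Kobayashi distance-decreasing property of holomorphic maps. Your additional verification that the pullback formula makes $\sigma_{f}$ a well-defined holomorphic self-map of $T_{f}$ is a legitimate fleshing-out of what the paper leaves implicit, not a different route.
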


The next lemma follows directly from the definitions.

\begin{lem}~\label{fp}
A  post-singularly finite $f$ in $\T2$ is combinatorially equivalent to a meromorphic map in $\M_2$ if and only if  $\sigma$ has a fixed point in $T_{f}$.
\end{lem}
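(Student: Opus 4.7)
The lemma is, as the authors remark, a direct translation between the combinatorial/topological language of the equivalence of branched coverings and the analytic language of fixed points in Teichm\"uller space. The dictionary is provided by the measurable Riemann mapping theorem, which gives a bijection between Beltrami coefficients $\mu \in \mathbf{M}$ (normalized so that $w^{\mu}$ fixes $0,1,\infty$) and normalized quasiconformal self-homeomorphisms of $\hat{\mathbb C}$, together with the observation that the pull-back formula~\eqref{pullbackformula} is exactly the formula for the Beltrami coefficient of the composition $w^{\mu}\circ f$. So the plan is simply to unwind the definitions on each side.

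For the sufficiency direction, assume $\sigma([\mu])=[\mu]$. I would set $\phi = w^{\mu}$ and $\psi = w^{f^{*}\mu}$. By the pull-back formula, $\phi\circ f$ and $\psi$ carry the same Beltrami coefficient $f^{*}\mu$, so the map $g := \phi\circ f\circ \psi^{-1}$ is holomorphic on $X=\mathbb C$. Since $\phi,\psi$ are homeomorphisms of $S^{2}$, the composition $g$ is a universal covering from $\mathbb C$ onto $S^{2}\setminus \phi(\Omega_{f})$, so $g\in\T2$; Proposition~\ref{prop2} then yields $g\in\M_{2}$. The relation $\phi\circ f = g\circ\psi$ is the commuting-square condition in the definition of combinatorial equivalence, and the equivalence $[\mu]=[f^{*}\mu]$ in $T_{f}$ supplies exactly the isotopy of $\phi^{-1}\circ\psi$ to the identity rel $P_{f}$ required there.

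For the necessity direction, suppose $f$ is combinatorially equivalent to some $g\in\M_{2}$ via homeomorphisms $\phi,\psi$, so that $\phi\circ g = f\circ\psi$ (or the equivalent reversed relation) and $\phi^{-1}\circ\psi\simeq \mathrm{id}$ rel $P_{f}$. Because this isotopy forces $\phi=\psi$ on $P_{f}$, and since $\{0,1,\infty\}\subset P_{f}$, after a harmless preliminary normalization both $\phi$ and $\psi$ fix $0,1,\infty$. Set $\mu := \mu_{\phi}\in\mathbf{M}$. Since $g$ is meromorphic (Beltrami coefficient zero), the composition formula~\eqref{pullbackformula} gives $\mu_{\psi}=f^{*}\mu$, so $\psi = w^{f^{*}\mu}$ and $\phi=w^{\mu}$. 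The assumed isotopy $\phi^{-1}\circ\psi\simeq \mathrm{id}$ rel $P_{f}$ is then precisely the statement that $[\mu]=[f^{*}\mu]$ in $T_{f}$, i.e.\ $[\mu]$ is a fixed point of $\sigma$.

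The only real obstacle is bookkeeping of normalizations: matching the condition that $w^{\mu}$ fixes $0,1,\infty$ with the weaker requirement in the definition that $\phi,\psi$ fix $\{0,\infty\}$ (the third marker $1=f(0)$ is supplied automatically by the postsingular set), and verifying that the isotopy ``rel $P_{f}$'' transfers consistently between the two sides of the equivalence, in particular that $\phi(P_{f})=P_{g}$ under the dynamics. No further analytic machinery is needed beyond the measurable Riemann mapping theorem and the composition formula for Beltrami coefficients, which is why the lemma is labeled as following ``directly from the definitions.''
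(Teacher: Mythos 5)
Your proof is correct and is precisely the unwinding the paper intends: the paper offers no argument at all for this lemma, asserting only that it ``follows directly from the definitions,'' and your dictionary (measurable Riemann mapping theorem plus the pull-back formula, with $\phi=w^{\mu}$, $\psi=w^{f^{*}\mu}$ and $g=\phi\circ f\circ\psi^{-1}$) is the standard one. The only point worth making explicit, beyond the normalization bookkeeping you already flag, is that in the necessity direction the homeomorphisms $\phi,\psi$ furnished by combinatorial equivalence need not be quasiconformal, so before writing $\mu:=\mu_{\phi}$ you should invoke the standard fact that $\phi$ may be replaced by a quasiconformal map isotopic to it rel the finite set $P_{f}$, with $\psi$ adjusted by lifting that isotopy through the coverings $f$ and $g$.
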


\begin{rem}~\label{threepoints}
If $\#(P_{f})=3$,  then $T_f$ consists of a single point.  This point is trivially a fixed point for $\sigma$ so
  the main theorem holds. 
We therefore assume that $\#(P_{f})\geq 4$ in the rest of the paper.
\end{rem}

\section{Bounded Geometry.}
Let the base point of $T_f$ be the hyperbolic Riemann surface $R=\hat{\mathbb C}\setminus P_{f}$  equipped with the standard complex structure $[0]\in T_{f}$.
For  $\tau$ in $T_{f}$, denote by  $R_{\tau}$  the hyperbolic Riemann surface  $R$ equipped with the complex structure $\tau$.

A simple closed curve $\gamma\subset R$ is called non-peripheral if each component of $\hat{\mathbb C}\setminus \gamma$ contains at least two points of $P_{f}$.
Let $\gamma$ be a non-peripheral simple closed curve in $R$. For any $\tau=[\mu]\in T_{f}$, let $l_{\tau}(\gamma)$ be the hyperbolic length of the  unique closed geodesic homotopic to $\gamma$ in $R_{\tau}$.

For any $\tau_{0}\in T_{f}$, let $\tau_{n}=\sigma^{n}(\tau_{0})$, $n\geq 1$.

\begin{defn}[Hyperbolic version]
We say $f$ has bounded geometry if there is a constant $a>0$ and a point $\tau_{0}\in T_{f}$ such that
$l_{\tau_{n}}(\gamma)\geq a$ for all $n\geq 0$ and all non-peripheral simple closed curves $\gamma$ in $R$.
\end{defn}

The iteration sequence $\tau_{n}=\sigma_{f}^{n}\tau_{0}=[\mu_{n}]$
determines a sequence of subsets of $\hat\CC$
$$
P_{n} = w^{\mu_{n}}(P_{f}), \quad n=0, 1, 2, \cdots.
$$
Since all the maps $w^{\mu_{n}}$ fix $0, 1, \infty$, it follows that $0, 1, \infty\in P_{n}$.

\begin{defn}[Spherical Version]
We say $f$ has bounded geometry if there is a constant $b>0$ and a point $\tau_{0}\in T_{f}$ such that
$$
d_{sp} (p_{n},q_{n}) \geq b
$$
for all $n\geq 0$ and $p_{n}, q_{n}\in P_n$,
where
$$
d_{sp}(z,z')= \frac{|z-z'|}{\sqrt{1+|z|^{2}}\sqrt{1+|z'|^{2}}}
$$
is the spherical distance on $\hat{\mathbb C}$.
\end{defn}

Note that $d_{sp}(z, \infty) = \frac{|z|}{\sqrt{1+|z|^2}}$.  Away from infinity the spherical metric and Euclidean metrics are equivalent.  Precisely, in any bounded  $K \subset \mathbb C$, there is a constant $C>0$ which depends only on $K$ such
that
$$
C^{-1} d_{sp}(x,y) \leq |x-y|\leq Cd_{sp}(x,y)\quad  \forall  x,y \in K.
$$

The following simple lemma justifies using the term ``bounded geometry'' in both of the definitions above for $f$.

\begin{lem}~\label{sg} Consider the hyperbolic Riemann surface
$\hat{\mathbb C}\setminus X$ equipped with the standard complex structure
where $X$ is a finite subset such that $0, 1, \infty \in X$. Let $m=\# (X)\geq 3$.
Let $a>0$ be a constant. If every simple closed geodesic in $\hat{\mathbb C}
\setminus S$ has hyperbolic length greater than $a$, then there is a constant $b=b(a,m)>0$ such that the
spherical distance between any two distinct points in $S$ is bounded below by $b$.
\end{lem}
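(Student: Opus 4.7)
The plan is to argue the contrapositive by contradiction, producing a short simple closed geodesic in $\hat{\mathbb C}\setminus X$ whenever two marked points of $X$ come spherically close. The case $m=3$ is degenerate because $X=\{0,1,\infty\}$ is forced, the thrice-punctured sphere carries no non-peripheral simple closed curves, so the hypothesis is vacuous and the conclusion holds with $b$ equal to the minimum of the three fixed spherical distances. I therefore assume $m\geq 4$ throughout.

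Suppose the conclusion fails: there is a sequence of $m$-element sets $X_{n}\subset\hat{\mathbb C}$ each containing $\{0,1,\infty\}$, such that every simple closed geodesic in $\hat{\mathbb C}\setminus X_{n}$ has hyperbolic length at least $a$, while distinct points $p_{n},q_{n}\in X_{n}$ satisfy $d_{sp}(p_{n},q_{n})\to 0$. By compactness of the sphere (and the symmetric product), pass to a subsequence so that $X_{n}$ converges in Hausdorff distance to some $X_{\infty}\subset\hat{\mathbb C}$ and both $p_{n}$ and $q_{n}$ converge to a common point $p_{\infty}\in X_{\infty}$.

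The heart of the argument is to exhibit, in $\hat{\mathbb C}\setminus X_{n}$, an embedded round spherical annulus of arbitrarily large conformal modulus separating $\{p_{n},q_{n}\}$ from the remaining marked points. Fix $\delta>0$ so small that the spherical ball $B_{sp}(p_{\infty},2\delta)$ meets $X_{\infty}$ only at $p_{\infty}$; then for large $n$ the only points of $X_{n}$ in $B_{sp}(p_{\infty},\delta)$ are $p_{n}$ and $q_{n}$. Setting $r_{n}=\sqrt{d_{sp}(p_{n},q_{n})}$, which still tends to $0$ but dominates $d_{sp}(p_{n},q_{n})$, the spherical annulus
$$A_{n}=\{\,z\in\hat{\mathbb C}\ :\ r_{n}<d_{sp}(z,p_{n})<\delta/2\,\}$$
embeds into $\hat{\mathbb C}\setminus X_{n}$. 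Its bounded complementary disc contains $\{p_{n},q_{n}\}$ and its complementary disc on the other side contains $X_{n}\setminus\{p_{n},q_{n}\}$, which has at least two points because $m\geq 4$; hence the core curve of $A_{n}$ is non-peripheral. Passing to a rotation of the sphere that sends $p_{n}$ to $0$ (which preserves $d_{sp}$), and computing in Euclidean coordinates, a direct estimate yields $\mathrm{mod}(A_{n})=\frac{1}{2\pi}\log(\delta/(2r_{n}))+O(1)\to\infty$.

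To finish, invoke the standard modulus-length inequality: any annulus $A$ embedded in a hyperbolic Riemann surface and homotopic to a simple closed geodesic $\gamma$ satisfies $\mathrm{mod}(A)\leq\pi/\ell(\gamma)$, as seen by lifting to the upper half-plane and comparing with the maximal annulus $\mathbb{H}/\langle z\mapsto e^{\ell(\gamma)}z\rangle$. Applied to $A_{n}$ and the geodesic $\gamma_{n}$ in its homotopy class in $\hat{\mathbb C}\setminus X_{n}$, this gives $\ell(\gamma_{n})\leq\pi/\mathrm{mod}(A_{n})\to 0$, contradicting the assumption $\ell(\gamma_{n})\geq a$. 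The main obstacle I anticipate is the geometric bookkeeping in the construction of $A_{n}$: one must verify that the chosen round annulus is honestly non-peripheral (precisely where the assumption $m\geq 4$ is essential), that no stray points of $X_{n}$ intrude, and that the modulus indeed blows up in the spherical rather than Euclidean scale; once these are in place, the modulus-length inequality delivers the contradiction without further work.
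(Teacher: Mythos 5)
Your compactness strategy is workable and genuinely different from the paper's argument, but one step fails as written. From Hausdorff convergence $X_n\to X_\infty$ and your choice of $\delta$ you conclude that for large $n$ the only points of $X_n$ in $B_{sp}(p_\infty,\delta)$ are $p_n$ and $q_n$. That does not follow: Hausdorff convergence controls how far points of $X_n$ can stray from $X_\infty$, not how many of them collapse onto $p_\infty$. Three or more points of $X_n$ may converge to $p_\infty$ at different rates; for instance a third point $s_n$ with $d_{sp}(p_n,s_n)$ comparable to $\sqrt{d_{sp}(p_n,q_n)}$ would land inside your annulus $A_n=\{r_n<d_{sp}(z,p_n)<\delta/2\}$, so $A_n$ need not embed in $\hat{\mathbb C}\setminus X_n$ and the modulus--length contradiction evaporates. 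The patch is routine: let $\epsilon_n$ be the Hausdorff distance from $X_n$ to $X_\infty$ and replace $r_n$ by any $\rho_n\to 0$ with $\rho_n>\epsilon_n+d_{sp}(p_n,p_\infty)$ (say its square root). Every point of $X_n$ in $B_{sp}(p_\infty,\delta)$ is within $\epsilon_n$ of $p_\infty$, hence within $\rho_n$ of $p_n$, so the whole cluster (which contains $p_n$ and $q_n$) lies in the inner disc, no point of $X_n$ meets the new annulus, and at least two of $\{0,1,\infty\}$ lie beyond the outer circle because at most one of them can be within $2\delta$ of $p_\infty$; the core is therefore non-peripheral and the modulus still blows up, so the rest of your argument goes through.

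For comparison, the paper's proof is direct and quantitative rather than by compactness: ordering the finite points of $X$ by modulus, it pigeonholes among the round annuli between consecutive moduli to find one of modulus at least $\log M/(2\pi(m-3))$, where $M$ is the largest modulus; the same modulus--length comparison you invoke (there phrased via monotonicity of the hyperbolic density) yields $M\le e^{2\pi^2(m-3)/a}$, which bounds the distance to $\infty$, and the Möbius maps $z\mapsto 1/z$ and $z\mapsto z/(z-x_i)$, which keep $0,1,\infty$ among the marked points, transfer that bound to every pair of finite points, producing an explicit $b(a,m)$. Your route, once patched, is conceptually shorter but purely existential: it produces no explicit constant, which is acceptable since the lemma only asserts existence, whereas the paper's argument gives the effective bound.
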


\begin{proof}
If $m=3$ there are no non-peripheral simple closed curves so in the following argument we always assume that $m\geq
4$. Let $X = \{x_{1}, \cdots, x_{m-1}\}$ and $x_{m} = \infty$ and let
$|\cdot|$ denote the Euclidean metric on ${\mathbb C}$.

Suppose $0=|x_{1}| \le \cdots \le |x_{m-1}|$. Let $M = |x_{m-1}|$.
Then $|x_{2}| \le 1$,  and we have
$$
\prod_{2 \le i \le m-2} \frac{|x_{i+1}|}{|x_{i}|} =
\frac{|x_{m-1}|}{|x_{2}|} \ge M.
$$
Hence
$$
\max_{2 \le i \le m-2} \Big\{ \frac{|x_{i+1}|}{|x_{i}|}\Big\} \ge
M^{\frac{1}{m-3}}.
$$
Let
$$
A_{i} = \{z\in {\mathbb C} \quad \Big{|}\quad |x_{i}| < z <
|x_{i+1}|\}
$$
and let $\hbox{mod}(A_{i}) = \frac{1}{2\pi} \log\frac{|x_{i+1|}}{ |{x_i}|}$ be its modulus. Then for some
integer $2\leq i_{0}\leq m_{0}-2$  if follows that
$$
\mod(A_{i_{0}}) \ge \frac{\log M}{2 \pi (m-3)}.
$$
Denote the  minimum length of closed curves  $\gamma$ in $A_{i_{0}}$, measured with respect to the hyperbolic metric on $A_{i_0}$,  by $\|\gamma\|_{A_{i_{0}}}$.
Because $A_{i_0}$ is a round annulus, the core curve realizes this minimum and we can compute its hyperbolic length as $\|\gamma\|_{A_{i_{0}}}  = \frac{\pi}{\mod(A_{i_{0}})}$.

Since  $A_{i_{0}} \subset \hat{\mathbb C}\setminus S$, the hyperbolic density on $A_{i_0}$ is smaller than the hyperbolic density on $\hat{\mathbb C}\setminus S$.  Therefore, if $l_{\tau_n}(\gamma)$ denotes the length of the shortest geodesic in the homotopy class of $\gamma$ with respect to the hyperbolic metric on $\hat{\mathbb C}\setminus S$, we have
$l_{\tau_n}(\gamma)\leq\|\gamma\|_{A_{i_{0}}} $. This implies that
$$
\frac{\pi}{l_{\tau_n}(\gamma)} \geq \mod(A_{i_{0}})\geq \frac{\log M}{2 \pi (m-3)}.
$$
Thus
$$
\log M \leq \frac{2 \pi^{2} (m-3)}{l_{\tau_n}} \leq \frac{2 \pi^{2} (m-3)}{a}.
$$
This implies that
$$
M \leq M_{0}=e^{\frac{2 \pi^{2} (m-3)}{a}}.
$$
Thus the spherical distance between $\infty$ and any finite point in
$X$ has a positive lower bound $M_{0}$ which depends only on $a$ and $m$.

Next we show that the spherical distance between any two
finite points in $X$ has a positive lower bound dependent only on
$a$ and $m$. By the equivalence of the spherical and Euclidean metrics in a bounded set in the plane,
it suffices to prove that $|x-y|$ is greater than a constant $b$ for any two finite points in $X$.

First consider $J_0 (z) =1/z$ which preserves hyperbolic length with  $0, 1,\infty\in J_0(X)$.
The above argument implies that $1/|x_{i}| \leq M_{0}$ for any $2\leq i\leq m-1$. This implies that $|x_{i}|\geq 1/M_{0}$ for any $2\leq i\leq m-1$.
Similarly, for any $x_{i}\in X$ for $2\leq i\leq m-1$, consider $J_i(z) =z/ (z-x_{i})$ which again preserves hyperbolic length.
The above argument implies that $|x_{j}/|x_{j}-x_{i}| \leq M_{0}$ for any $2\leq j\not= i\leq m-1$. This in turn
implies that $|x_{j}-x_{i}| \geq 1/M_{0}^{2}$ for any $2\leq j\not= i\leq m-1$ which proves the lemma.
\end{proof}

\section{  Necessity }
\label{sec:nec}

If $f$ is combinatorially equivalent to
$g\in \M2$, then $\sigma_{f}$ has a unique fixed point $\tau_{0}$ so that $\tau_{n}=\tau_{0}$ for all $n$.  The complex structure on $\hat{\mathbb C} \setminus P_f$  defined by $\tau_0$ induces  a hyperbolic metric on it.   The shortest geodesic in this metric gives a lower bound on the lengths  of all geodesics  so that  $f$ satisfies the hyperbolic definition of bounded geometry.

\section{  Sufficiency   assuming  compactness}
\label{sec:suff}

The proof of sufficiency of our main theorem (Theorem~\ref{mainthm}) is more complicated and needs some preparatory material.
There are two parts:  one is a compactness argument and the other is a fixed point argument.  From a conceptual point of view, the compactness of the iterates is very natural and simple. From a technical point of view, however, it is not at all obvious.  
Once one has compactness, the proof of the fixed point argument  is quite standard (see~\cite{Ji}) and works for much more general cases.  We postpone  the compactness proof to the next two sections and here give the fixed point argument.
 
Given $f\in\T2$ and given any $\tau_{0}=[\mu_{0}]\in T_f$, let $\tau_{n}=\sigma^{n} (\tau_{0}) =[\mu_{n}]$
be the sequence generated by $\sigma$. Let $w^{\mu_{n}}$ be the normalized quasiconformal map with Beltrami coefficient $\mu_{n}$.
Then
$$
g_{n} = w^{\mu_{n}}\circ f\circ (w^{\mu_{n+1}})^{-1}\in \M2
$$
since it preserves
$\mu_0$ and hence is holomorphic.
Thus iterating $\sigma$, the ``Thurston iteration'', determines a sequence $\{g_{n}\}_{n=0}^{\infty}$ of maps in $\M2$ and a sequence of subsets
$P_{f,n}=w^{\mu_{n}} (P_{f})$.
Note that $P_{f,n}$ is not, in general, the post-singular set $P_{g_{n}}$ of $g_{n}$.
If it were, we would have a fixed point of $\sigma$.

\vspace*{10pt}


Suppose $f$ is a post-singularly finite map in $\T2.$  
For any $\tau=[\mu]\in T_{f}$, $w^{\mu}$ denotes a representative normalized quasiconformal map fixing $0,1, \infty$ with Beltrami differential $\mu$; 
let $T_{\tau}$ and $T^{*}_{\tau}$ denote the respective tangent space and
cotangent space of $T_{f}$ at $\tau$.  
Then $T^{*}_{\tau}$ coincides with the space ${\mathcal Q}_{\mu}$
of integrable meromorphic quadratic differentials $q=\phi(z) dz^{2}$ on $\hat\CC$.
Integrability means that  the norm of $q$, defined as
$$
||q|| =\int_{\hat{\mathbb C}} |\phi(z)| dzd\overline{z}
$$
is finite.  The finiteness implies that   $q$ may only have poles  at points of $w^{\mu} (P_{f})$ and that these poles are simple.

Set $\tilde{\tau}=\sigma(\tau)=[\tilde{\mu}]$.   By abuse of notation, we write  $f^{-1}(P_{f})$ for  $f^{-1}(P_{f}\setminus \{\infty\})\cup\{\infty\}$.  We have the following commutative
diagram:
$$
\begin{array}{ccc} \hat{\mathbb C}\setminus f^{-1}(P_{f}) & {\buildrel w^{\tilde{{\mu}}} \over
\longrightarrow} & \hat{\mathbb C}\setminus w^{\tilde{{\mu}}} (f^{-1}(P_{f}))\cr \downarrow f
&&\downarrow g_{\mu,\tilde{\mu}}\cr \hat{\mathbb C}\setminus P_{f}& {\buildrel w^{\mu}
\over \longrightarrow} & \hat{\mathbb C}\setminus w^{\mu}(P_{f}).
\end{array}
$$
 By the definition of $\sigma$,  $\tilde{\mu} = f^*\mu$ so that the map
  $g=g_{\mu,\tilde{\mu}}=  w^{\mu} \circ f \circ  (w^{\tilde{\mu}})^{-1}$ defined on ${\mathbb C}$
is meromorphic. By remark~\ref{prop2}, $g(z)$ is in $\M_{2}$  and in particular, $g'(z) \neq 0$.

Let $\sigma_{*}=d\sigma: T_{\tau}\to T_{\tilde{\tau}}$ and $\sigma^{*}: T_{\tilde{\tau}}^* \to T_{\tau}^*$ be the respective tangent and co-tangent maps of $\sigma$.   Let $\eta$ be a tangent vector at $\tau$ so that $\tilde{\eta} = \sigma_{*} \eta$ is the corresponding tangent vector at $\tilde{\tau}$.   
 These tangent vectors can be pulled back to vectors $\xi, \tilde{\xi}$ at the origin in $T_f$ by maps 
$$
\eta = (w^{\mu})^{*}\xi \quad \hbox{and}\quad \tilde{\eta} =
(w^{\tilde{\mu}})^{*} \tilde{\xi}.
$$

This results in  the following commutative diagram,
$$
\begin{array}{ccc} \tilde{\eta}& {\buildrel (w^{\tilde{\mu}})^{*} \over
\longleftarrow} & \tilde{\xi}\cr \hbox{\hskip17pt}\uparrow f^{*} &
& \hbox{\hskip17pt}\uparrow g^{*} \cr \eta & {\buildrel (w^{\mu})^{*} \over \longleftarrow}& \xi\cr
\end{array}
$$

Now suppose $\tilde{q}$ is a co-tangent vector in
$T^{*}_{\tilde{\tau}}$ and let
$q=\sigma^{*} \tilde{q}$ be the corresponding co-tangent vector in
$T^{*}_{\tau}$. Then
$\tilde{q}=\tilde{\phi} (w) dw^{2}$ is an integrable  quadratic differential on $\hat{\mathbb C}$ whose only poles can be simple and occur at the points in $w^{\tilde{\mu}}(P_{f})$; 
$q=\phi (z) dz^{2}$ is an integrable  quadratic differential on $\hat{\mathbb C}$ whose only poles can be simple and occur at the points in 
$w^{\mu}(P_{f})$. This implies that $q=\sigma_{*}\tilde{q}$
is also the push-forward integrable quadratic differential
$$
q=g_{*}\tilde{q} =\phi (z) dz^{2}
$$
of $\tilde{q}$ by $g$.   This follows from the fact that  $w^{\tilde\mu}$ takes the tesselation of fundamental domains for $f$ to a tesselation of fundamental domains for $g$ and on each  fundamental domain $g$ is a is a homeomorphism onto $\hat{\C} \setminus \{ 0, \lambda \}$ since 
$0, \lambda $ are the two  asymptotic values of $g$.  
 The  coefficient $\phi(z)$ of $q$ is therefore given by the standard transfer operator $\mathcal L$
\begin{equation}~\label{pushforwardformula}
\phi(z)= ({\mathcal L}\tilde{\phi}) (z) =\sum_{g(w) = z}
\frac{\tilde{\phi} (w)dw^2}{(g'(w))^{2}}.
\end{equation}

Since $g'(w) \neq 0$,  equation~(\ref{pushforwardformula}) implies  the poles of  $q$ occur only at the images of the poles of $\tilde{q}$;  the integrability implies these poles can only be simple.  Therefore,   as a  meromorphic quadratic differential defined on $\hat\CC$,  $q$ satisfies 
\begin{equation}\label{qineq}
||q||\leq ||\tilde{q}||.
\end{equation}

By  formula~(\ref{pushforwardformula}) we have
$$
<\tilde{q} ,\tilde{\xi}> = <q, \xi>
$$
which, together with  inequality~(\ref{qineq}),  implies
$$
\|\tilde{\xi}\| \leq \|\xi\|.
$$
This gives another proof that $\sigma$ is weakly contracting. 
 We can, however, prove strong contraction.  

\begin{lem}~\label{infstrongcon}
$$
||q||<||\tilde{q}||
$$
and
$$
\|\tilde{\xi}\| < \|\xi\|.
$$
\end{lem}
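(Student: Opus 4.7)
The weak inequality $\|q\|\le\|\tilde q\|$ is already implicit in the excerpt: applying the triangle inequality inside the pushforward formula $\phi(z)=\sum_{g(w)=z}\tilde\phi(w)/g'(w)^{2}$ and using that $g\in\M_2$ tessellates $\mathbb C$ into countably many fundamental domains, each mapped bijectively onto $\hat{\mathbb C}\setminus\{0,\lambda\}$, yields
$\|q\|\le \int_{\hat{\mathbb C}}\sum_{g(w)=z}|\tilde\phi(w)|/|g'(w)|^{2}\,dA(z)=\|\tilde q\|.$
My plan is to rule out equality whenever $\tilde q\neq 0$, and then to transfer the resulting strict inequality to tangent vectors by the usual Teichm\"uller duality.

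The central step is to analyze what equality would require. Suppose $|\phi(z)|=\sum_{g(w)=z}|\tilde\phi(w)|/|g'(w)|^{2}$ for a.e.\ $z$. On a simply connected neighborhood $U\subset\hat{\mathbb C}$ avoiding $w^\mu(P_f)$ and the branch locus, the local branches
$h_k(z):=\tilde\phi(w^{(k)}(z))/g'(w^{(k)}(z))^{2}$
are holomorphic, indexed by $k\in\mathbb Z$ through the deck group of $g$. Because every $g\in\M_2$ has the form $M\circ g_\beta$ with $g_\beta(z)=e^{\beta z}$ and $M$ a M\"obius transformation, its deck group is the infinite cyclic group of translations $T_k(w)=w+2\pi ik/\beta$. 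Equality in the triangle inequality forces all the $h_k(z)$ to share a common argument for a.e.\ $z$; hence each $h_k/\phi$ is meromorphic on $U$ and real-valued a.e., so it must be a nonnegative real constant $r_k\ge 0$, and summing gives $\sum_k r_k=1$.

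The contradiction comes from combining analytic continuation with the single-valuedness of $\phi$. Continuing around a loop in the base whose monodromy is the generator of the deck group cyclically shifts the labels, so that $h_k^{\text{continued}}=h_{k+1}$; since $\phi$ is globally defined on $\hat{\mathbb C}$, the identity $h_k=r_k\phi$ continues to $h_{k+1}=r_k\phi$, forcing $r_k=r_{k+1}$ for all $k$. Thus all $r_k$ equal a common value $r$, but then $\sum_{k\in\mathbb Z} r=1$ is impossible for any real $r$. The only remaining escape is $\phi\equiv 0$ on $U$; but then each $h_k\equiv 0$ on $U$, and since the orbit $\{T_k(w^{(0)}(z)):z\in U,\,k\in\mathbb Z\}$ is dense in $\mathbb C$, $\tilde\phi$ vanishes identically, contradicting $\tilde q\neq 0$. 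This establishes $\|q\|<\|\tilde q\|$.

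The tangent-vector statement follows by duality. For a nonzero $\tilde\xi\in T_{\tilde\tau}$, since $T_f$ is finite-dimensional the supremum
$\|\tilde\xi\|=\sup\{|\langle\tilde q,\tilde\xi\rangle|/\|\tilde q\|:\tilde q\neq 0\}$
is attained by some nonzero extremal $\tilde q^{*}$. Setting $q^{*}=\sigma^{*}\tilde q^{*}$ and using $\langle\tilde q^{*},\tilde\xi\rangle=\langle q^{*},\xi\rangle$ together with the strict inequality already proved, we obtain
$$\|\tilde\xi\|=\frac{|\langle q^{*},\xi\rangle|}{\|\tilde q^{*}\|}\le\frac{\|q^{*}\|\,\|\xi\|}{\|\tilde q^{*}\|}<\|\xi\|.$$
The main obstacle is the equality-case argument: the triangle inequality for a finite sum of terms (as in the rational case) does not, by itself, give strict inequality, and one must genuinely exploit that the deck group of $g$ is infinite cyclic to rule out the constant-ratio solutions and conclude $\tilde\phi\equiv 0$.
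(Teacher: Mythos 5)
Your proof is correct and follows essentially the paper's own route: the weak inequality comes from the change of variables and the triangle inequality, and equality is excluded by showing that the infinitely many branch terms would have to be fixed nonnegative multiples of one another (your monodromy step makes explicit the constancy the paper asserts before concluding via ``$\phi(z)=\infty$'' and the pole-counting remark), which is incompatible with the convergence of the pushforward sum unless $\tilde\phi\equiv 0$. One small repair: the orbit $\{T_k(w^{(0)}(z))\}$ need not be dense in $\C$, but this is harmless, since $\tilde\phi$ vanishes on the nonempty open set $g^{-1}(U)$ and the identity theorem already forces $\tilde\phi\equiv 0$.
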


\begin{proof}
Suppose there is a $\tilde{q}$ such that $||q||=||\tilde{q}||$
and that $Z$ is the set of poles of $\tilde{q}$.
Then, since $g$ has no critical points,  the poles of $q$ must be contained in   $g(Z)$.  
Using a change of variables on each fundamental domain we obtain the equaiities
$$
\int_{\hat{\mathbb C}} |\phi(z)|dz  \, d\overline{z} =\int_{\hat{\mathbb C}} |\tilde{\phi} (w)| \, dw d\overline{w}=\int_{\hat{\mathbb C}} \big|\frac{\tilde{\phi} (w)}{(g'(w))^{2}} \big| \, dz d\overline{z}. 
$$

The triangle inequality then implies that  at every point $z$  the argument  of $\frac{\tilde{\phi} (w)}{(g'(w))^{2}}$ is the same;  that is,  for each pair $w,w'$ with $g(w)=g(w')=z$, there is a positive real number $a_{z}$ such that 
$$
\frac{\tilde{\phi} (w)}{(g'(w))^{2}}=a_{z} \frac{\tilde{\phi} (w')}{(g'(w'))^{2}}.
$$
 Thus,  by formula~(\ref{pushforwardformula}) we see that  $||q||=||\tilde{q}||$ implies $\phi(z)= \infty$ giving us a contradiction.
\end{proof}  
\begin{rem}  What we have shown is that $||q||=||\tilde{q}||$ implies 
$$g_{*}q =\phi (g(w))  = a \tilde{q} (w)$$ and therefore
all the pre-images of all the poles are poles.  That is, 
$$
g^{-1} (g(Z)) \subset Z \cup {\Omega}_{g}.
$$
But this is a contradiction because $g^{-1}(g(Z))$ is an infinite set and $Z\cup \Omega_{g}$ is a finite set.
\end{rem}

As an immediate corollary we have strong contraction. 

\begin{cor}~\label{strongcon}
For any two points $\tau$ and $\tau'$ in $T_{f}$,
$$
d_{T}\Big(\sigma(\tau), \sigma(\tau')\Big)< d_{T}(\tau, \tau').
$$
\end{cor}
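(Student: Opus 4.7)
The plan is to deduce the global (integrated) strict contraction from the infinitesimal strict contraction of Lemma~\ref{infstrongcon} by working with Teichm\"uller geodesics. Since $T_{f}$ is a finite-dimensional Teichm\"uller space, the Teichm\"uller distance $d_{T}$ coincides with the Kobayashi distance $d_{K}$, and both are obtained by integrating the infinitesimal Finsler norm along paths. Concretely, $d_{T}(\tau,\tau')$ equals the length of the Teichm\"uller geodesic $\gamma:[0,1]\to T_{f}$ joining $\tau$ to $\tau'$, and this length is computed by pairing the velocity $\gamma'(t)\in T_{\gamma(t)}$ against the dual cotangent norm on integrable quadratic differentials.

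First I would fix such a geodesic $\gamma$ from $\tau$ to $\tau'$. Because $\gamma$ is a Teichm\"uller geodesic, $\gamma'(t)\ne 0$ for every $t\in[0,1]$. Composing with $\sigma$ gives a (not necessarily geodesic) path $\sigma\circ\gamma$ from $\sigma(\tau)$ to $\sigma(\tau')$ whose length bounds $d_{T}(\sigma(\tau),\sigma(\tau'))$ from above:
\[
d_{T}\bigl(\sigma(\tau),\sigma(\tau')\bigr)\;\le\;L(\sigma\circ\gamma)\;=\;\int_{0}^{1}\bigl\|(\sigma_{*})_{\gamma(t)}\,\gamma'(t)\bigr\|\,dt.
\]
Next I would apply Lemma~\ref{infstrongcon} pointwise along $\gamma$. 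For each $t$, the tangent vector $\gamma'(t)$ is nonzero, so the dual of the strict inequality $\|q\|<\|\tilde q\|$ (valid for every nonzero cotangent vector $\tilde q$ at $\gamma(t)$) gives the strict infinitesimal contraction
\[
\bigl\|(\sigma_{*})_{\gamma(t)}\,\gamma'(t)\bigr\|\;<\;\|\gamma'(t)\|\qquad\text{for every }t\in[0,1].
\]

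Finally I would integrate this strict pointwise inequality. Since $\sigma$ is holomorphic, $\sigma_{*}$ depends continuously on the base point, so the integrand on the left is a continuous function of $t$ strictly less than the continuous integrand on the right at every point of the compact interval $[0,1]$. Integrating preserves the strict inequality, so
\[
L(\sigma\circ\gamma)\;<\;\int_{0}^{1}\|\gamma'(t)\|\,dt\;=\;d_{T}(\tau,\tau'),
\]
which combined with the displayed inequality above yields $d_{T}(\sigma(\tau),\sigma(\tau'))<d_{T}(\tau,\tau')$, as desired.

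The only nontrivial point in this outline is the passage from the pointwise strict contraction to a strict contraction on the integrated length; the main obstacle is simply ensuring that one works with a path on which $\gamma'(t)$ never vanishes (so that Lemma~\ref{infstrongcon} applies at every $t$), which is guaranteed by choosing a Teichm\"uller geodesic, and that continuity in $t$ promotes a.e.\ strict inequality to strict inequality of integrals, which follows from the holomorphicity of $\sigma$ and the finite-dimensionality of $T_{f}$.
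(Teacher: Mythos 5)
Your proposal is correct and fleshes out exactly the deduction the paper intends: the paper states Corollary~\ref{strongcon} as an ``immediate corollary'' of Lemma~\ref{infstrongcon}, and the standard way to pass from the pointwise (infinitesimal) strict contraction to the strict inequality of distances is precisely your argument of integrating the Finsler norm along a Teichm\"uller geodesic, using finite-dimensionality so that geodesics realize $d_T$ and the pointwise strict inequality survives integration. The only cosmetic remark is that the tangent-vector form of the strict contraction ($\|\tilde{\xi}\|<\|\xi\|$) is already part of Lemma~\ref{infstrongcon}, so you may cite it directly rather than re-deriving it by duality (where attainment of the supremum, i.e.\ compactness of the unit sphere of quadratic differentials, is the point).
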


Furthermore,

\begin{prop}
If $\sigma$ has a fixed point in $T_{f}$, then this fixed point must be unique. This is equivalent to saying  that
 a post-singularly finite $f$ in $\T2$  is combinatorially equivalent to at most one map $g\in \M2$.
\end{prop}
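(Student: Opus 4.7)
The plan is to deduce uniqueness of the fixed point from the strict contraction of $\sigma$ proved in Corollary~\ref{strongcon}, and then translate this back into the statement about combinatorial equivalence via Lemma~\ref{fp}.

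First I would handle the Teichm\"uller-theoretic half. Suppose, for contradiction, that $\sigma$ admits two distinct fixed points $\tau,\tau'\in T_f$. Since $T_f$ is a finite-dimensional complex manifold on which the Teichm\"uller and Kobayashi metrics coincide, $d_T(\tau,\tau')$ is a finite positive number. Applying Corollary~\ref{strongcon} to this pair yields
$$
d_T(\tau,\tau')=d_T\!\bigl(\sigma(\tau),\sigma(\tau')\bigr)<d_T(\tau,\tau'),
$$
which is absurd. Hence a fixed point, if it exists, is unique. Note that Lemma~\ref{contractive} alone (weak contraction from holomorphy) would not suffice; it is the strict inequality coming from the quadratic-differential estimate $\|q\|<\|\tilde q\|$ of Lemma~\ref{infstrongcon} that forces equality of the two fixed points.

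Next I would translate this into the equivalence stated in the proposition. By Lemma~\ref{fp}, a post-singularly finite $f\in\T2$ is combinatorially equivalent to some $g\in\M_2$ if and only if $\sigma$ has a fixed point in $T_f$. Given a combinatorial equivalence $\phi\circ g=f\circ\psi$ with $\phi,\psi$ fixing $\{0,1,\infty\}$ and $\phi^{-1}\!\circ\!\psi$ isotopic to the identity rel $P_f$, the Beltrami coefficient of $\phi$ (which, up to our normalizations, represents $g$ relative to the base complex structure) produces a point $\tau_g\in T_f$, and the functional equation together with the isotopy condition forces $\sigma(\tau_g)=\tau_g$. Different $g_1,g_2\in\M_2$ combinatorially equivalent to $f$ therefore produce fixed points $\tau_{g_1},\tau_{g_2}$; since these must coincide by the contraction argument above, the two combinatorial equivalences differ by a map isotopic to the identity rel $P_f$, and hence by the normalization fixing $0,1,\infty$ and the compatibility with the induced complex structure, $g_1$ and $g_2$ agree as elements of $\M_2$ (up to the affine ambiguity already built into the setup).

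I do not anticipate a serious obstacle here: the entire substance is packaged in Corollary~\ref{strongcon}, whose proof has already been carried out. The only mildly delicate point is to be clear that a fixed point of $\sigma$ corresponds bijectively to a combinatorial-equivalence class of realizations of $f$ by a map in $\M_2$ (modulo the normalization $f(0)=1$ and fixing $0,1,\infty$), so that uniqueness in $T_f$ indeed gives uniqueness of the realization in $\M_2$; this is essentially bookkeeping from the definitions in \S4--\S6.
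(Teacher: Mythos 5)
Your argument is correct and is essentially the paper's own (implicit) proof: the proposition is stated immediately after Corollary~\ref{strongcon} precisely because strict contraction rules out two distinct fixed points, and Lemma~\ref{fp} supplies the translation between fixed points of $\sigma$ and realizations in $\M_2$. Nothing in your write-up deviates from that route, and the bookkeeping you flag at the end is exactly what the paper treats as following directly from the definitions.
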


{We can now finish the proof of the sufficiency under the assumptions 
that  $f$ has bounded geometry and that the meromorphic maps 
  defined by
\begin{equation}\label{thu iter}
g_{n}=w^{\mu_{n}}\circ f\circ (w^{\mu_{n+1}})^{-1}
\end{equation}
remain inside a compact subset of  $\M2$.

Note that if $P_f = \{0,1,\infty\}$, then $f$ is a universal covering map of $\mathbb{C}^*$ and is therefore combinatorially equivalent to $e^{2 \pi i z}$.
Thus in the following argument, we assume that $\#(P_f) \geq 4$. Then, given our normalization conventions and the bounded geometry hypothesis we see that
the functions  $g_{n}$, $n=0, 1, \ldots$ satisfy the following conditions:
\begin{itemize}
\item[1)] $m=\#(w^{\mu_{n}}(P_f))\geq 4$ is fixed.
\item[2)] $0, 1, \infty, g_{n}(1) \in w^{\mu_{n}}(P_f)$.
\item[3)] $\{ 0, 1, \infty\} \subseteq
g_{n}^{-1}(w^{\mu_{n}}(P_f))$.
\item[4)] there is a $b>0$ such that $d_{sp}(p_{n}, q_{n}) \geq b$ for any $p_{n}, q_{n}\in w^{\mu_{n}}(P_f)$.
\end{itemize}

       \vspace*{5pt}
       

%

Any integrable quadratic differential $q_{n} \in T^{*}_{\tau_{n}}$ has, at worst, simple poles in the finite set  $P_{n+1,f}= w^{\mu_{n+1}}(P_f)$.
Since $T^{*}_{\tau_{n}}$ is a finite dimensional linear space, there is a quadratic differential $q_{n, max}\in T^{*}_{\tau_{n}}$
with $\| q_{n,max}\|=1$ such that
$$
0 \leq a_{n}=\sup_{||q_{n}||=1} \|(g_{n})_{*}q_{n}|| = \|(g_{n})_{*}q_{n,max}\| <1.
$$
Moreover, by the bounded geometry condition,  the possible simple poles of $\{q_{n, max}\}_{n=1}^{\infty}$ lie in a
  compact set and hence these quadratic differentials lie in a compact subset of the  space of quadratic differentials on $\hat{\mathbb C}$ with, at worst, simples poles at $m=\#(P_{f})$ points.

Let
$$
a_{\tau_{0}} =\sup_{n\geq 0} a_{n}.
$$
Let  $\{n_{i}\}$  be a sequence of  integers such that the subsequence $a_{n_{i}}\to a_{\tau_{0}}$ as $i\to \infty$.
By our assumption of compactness, $\{g_{n_{i}}\}_{i=0}^{\infty}$ has
a convergent subsequence, (for which we  use the same notation)
that converges to a holomorphic  map $g \in \M2$.
Taking a further subsequence if necessary, we obtain a convergent sequence of sets $P_{n_{i},f} =w^{\mu_{n_{i}}}(P_{f})$  with limit set $X$.
By bounded geometry, $\#(X)=\#(P_{f})$ and  $d_{sp}(x, y) \geq b$ for any $x,y\in X$.
Thus we can find a   subsequence $\{ q_{n_{i}, max}\}$ converging to an integrable quadratic differential $q$ of norm $1$  whose only poles lie in $X$ and are simple.
Now by lemma~\ref{infstrongcon},
$$
a_{\tau_{0}} = ||g_{*} q|| <1.
$$

Thus we have proved  that there is an  $0< a_{\tau_{0}}< 1$, depending only on $b$ and $f$ and $\tau_{0}$ such that
$$
\|\sigma_{*}\| \le
\|\sigma^{*}\| \le a_{\tau_{0}}.
$$
Let $l_{0}$ be a curve connecting $\tau_{0}$ and $\tau_{1}$ in $T_{f}$ and set $l_{n}=\sigma_{f}^{n}(l_{0})$ for $n\geq 1$. Then $l=\cup_{n=0}^{\infty}l_{n}$ is a curve in $T_f$
 connecting all the points $\{\tau_{n}\}_{n=0}^{\infty}$. For each point $\tilde{\tau}_{0}\in l_{0}$, we have $a_{\tilde{\tau}_{0}} <1$. Taking the maximum  gives  a uniform $a<1$ for all points in $l_0$.  Since $\sigma$ is holomorphic, $a$ is an upper bound for all points in $l$.  Therefore,
$$
d_{T} (\tau_{n+1}, \tau_{n}) \leq a \, d_{T}(\tau_{n}, \tau_{n-1})
$$
for all $n\geq 1$.
Hence, $\{ \tau_{n}\}_{n=0}^{\infty}$ is a convergent sequence with a unique limit point $\tau_{\infty}$ in $T_{f}$ and $\tau_{\infty}$  is
a fixed point of $\sigma$.

\section{Compactness}
\label{sec:com}

The final step  in the  proof of the main theorem is to show the compactness assumption is valid.   
In the case of rational maps where the map is a branched covering of finite degree,  the bounded geometry condition guarantees 
compactness,  in the case of $f\in\T2$, however, because the map is a branched covering of infinite degree,  we need a further discussion of the topological properties of post-singular maps.   We will show that for these maps there is a topologicial constraint that, together with 
  bounded geometry condition  guarantees  compactness under the iteration process.    The point is that this constraint
 and  the bounded geometry condition together control  the size of the fundamental domains so that they are neither  too small nor too big. 

\subsection{A topological constraint.}\label{sec:topconst}
We start with $f \in \T2$;  recall $\Omega_{f}=\{0, \lambda\}$ is the set of asymptotic values of $f$ and that we have normalized so that $f(0)=1$. 
Suppose that this $f$ is post-singularly finite;  that is, $P_{f}$ is finite so that 
the orbits $\{c_{k}=f^{k} (0)\}_{k=0}^{\infty}$ and $\{c_{k}'=f^{k} (\lambda)\}_{k=0}^{\infty}$ are both finite, and thus, preperiodic.  Note that neither can be  periodic because the asymptotic values are omitted.    Consider the orbit of $0$.  Preperiodicity  means there are integers $k_{1}\geq 0$ and $l\geq 1$ such that $f^{l}(c_{k_{1}+1}) =   c_{k_{1}+1}$.  That is, 
$$
\{ c_{k_{1}+1}, \ldots, c_{k_{1}+l}\}
$$
is a periodic orbit of period $l$. Set $k_{2} =k_{1}+l$.  

Let $\gamma$ be a continuous curve connecting $c_{k_{1}}$ to $c_{k_{2}}$ in 
${\mathbb R}^{2}$ which is disjoint from $P_f$, except at its endpoints. 
Because $f(c_{k_{1}})=f(c_{k_{2}})=c_{k_{1}+1}$,  the image curve $\delta= f(\gamma)$  is a closed curve.  We can choose $\gamma$ once and for all  such that $\delta$ separates $0$ and $\lambda$;  that is, so that $\delta$ is a non-trivial curve closed curve in $\hat{\C} \setminus \{0, \lambda\}$.    The fundamental group $\pi_{1}(\hat{\C} \setminus \{0, \lambda\})={\mathbb Z}$ so    the homotopy class $\eta=[\delta]$  in the fundamental group is an integer which essentially counts the number of fundamental domains 
between $c_{k_{1}}$ and $c_{k_{2}}$  and defines a ``distance'' between the fundamental domains. The integer $\eta$ depends only on the choice of $\gamma$ and since $\gamma$ is fixed, so is $\eta$.   

We now show that $\eta$ is an invariant of the Thurston iteration procedure and is thus a topological constraint on the iterates.  
 
\medskip 
\begin{lem}\label{winding}
Given $\tau_{0}=[\mu_{0}]\in T_f$, let $\tau_{n}=\sigma^{n} (\tau_{0}) =[\mu_{n}]$
be the sequence generated by $\sigma$. Let $w^{\mu_{n}}$ be the normalized quasiconformal map with Beltrami coefficient $\mu_{n}$ Let  $\gamma_{n+1}=w^{\mu_{n+1}}(\gamma)$, $\delta_{n}= w^{\mu_{n}}(\delta)$ and $\lambda_n=w^{\mu_{n}}(\lambda)$.    Then $[\delta_{n}] \in \pi_{1}(\hat{\C} \setminus \{0, \lambda_{n}\})= \eta$ for all $n$.   
\end{lem}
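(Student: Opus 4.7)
The plan is to push the curve $\delta$ forward through the Thurston commutative diagram and exploit the fact that the normalizing quasiconformal maps $w^{\mu_{n}}$ are orientation-preserving homeomorphisms of $\hat{\mathbb C}$ fixing $0$.

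First, from $\mu_{n+1}=f^{*}\mu_{n}$, the pullback formula shows that $g_{n}=w^{\mu_{n}}\circ f\circ(w^{\mu_{n+1}})^{-1}$ has vanishing Beltrami coefficient, hence is meromorphic. By Proposition~\ref{prop2} it lies in $\M2$, and its asymptotic values are $\Omega_{g_{n}}=\{w^{\mu_{n}}(0),w^{\mu_{n}}(\lambda)\}=\{0,\lambda_{n}\}$. Applying the identity $w^{\mu_{n}}\circ f=g_{n}\circ w^{\mu_{n+1}}$ to $\gamma$ gives
$$
\delta_{n}\;=\;w^{\mu_{n}}(\delta)\;=\;w^{\mu_{n}}(f(\gamma))\;=\;g_{n}(w^{\mu_{n+1}}(\gamma))\;=\;g_{n}(\gamma_{n+1}),
$$
so $\delta_{n}$ is indeed a closed curve in $\hat{\mathbb C}\setminus\{0,\lambda_{n}\}$, and in particular its homotopy class there is well defined.

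Next I would observe that $w^{\mu_{n}}$, as a self-homeomorphism of $\hat{\mathbb C}$, fixes $0,1,\infty$ and sends $\lambda$ to $\lambda_{n}$, so its restriction is an orientation-preserving homeomorphism from $\hat{\mathbb C}\setminus\{0,\lambda\}$ onto $\hat{\mathbb C}\setminus\{0,\lambda_{n}\}$. Each of these two surfaces has infinite cyclic fundamental group, and we identify both with $\mathbb Z$ by sending the class of a small counterclockwise loop around $0$ to $+1$. Because $w^{\mu_{n}}$ is orientation-preserving and fixes $0$, it carries a small counterclockwise loop around $0$ to a small counterclockwise loop around $0$; thus $(w^{\mu_{n}})_{*}$ is the identity map of $\mathbb Z$ under these canonical identifications. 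Therefore
$$
[\delta_{n}] \;=\; [w^{\mu_{n}}(\delta)] \;=\; (w^{\mu_{n}})_{*}[\delta] \;=\; \eta,
$$
which is the required equality.

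The argument is essentially topological; the only genuine subtlety is to fix a single orientation convention so that $\eta$ represents the same integer for every $n$, and to verify that it is preserved (rather than negated) by $(w^{\mu_n})_*$. No deeper analytic input is required beyond the standard fact that solutions of the Beltrami equation with $\|\mu\|_{\infty}<1$ are orientation-preserving homeomorphisms of $\hat{\mathbb C}$, together with the commutativity of the Thurston diagram defining $\sigma$.
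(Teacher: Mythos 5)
Your proposal is correct and follows essentially the same route as the paper: use the identity $w^{\mu_{n}}\circ f=g_{n}\circ w^{\mu_{n+1}}$ to write $\delta_{n}=w^{\mu_{n}}(\delta)=g_{n}(\gamma_{n+1})$, then note that the normalized homeomorphism $w^{\mu_{n}}:\hat{\C}\setminus\{0,\lambda\}\to\hat{\C}\setminus\{0,\lambda_{n}\}$ preserves homotopy classes, so $[\delta_{n}]=\eta$ for all $n$. Your extra remark that $w^{\mu_{n}}$ is orientation-preserving and fixes $0$, so that the identification of both fundamental groups with $\ZZ$ is consistent and the class is not negated, is a small but welcome sharpening of the paper's one-line justification.
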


\begin{proof}

The iteration defines the map 
$$
g_{n} = w^{\mu_{n}}\circ f\circ (w^{\mu_{n+1}})^{-1}\in \T2
$$
which is holomorphic since it  preserves $\mu_0$.    
The continuous curve
$$
\gamma_{n+1}=w^{\mu_{n+1}} (\gamma)
$$
goes from  $c_{k_{1}, n+1}= w^{\mu_{n+1}} (c_{k_{1}})$ to $c_{k_{2}, n+1}= w^{\mu_{n+1}} (c_{k_{2}})$.  
The image curve
$$
\delta_{n}=g_{n} (\gamma_{n+1}) =   w^{\mu_{n}}(f((w^{\mu_{n+1}})^{-1} (\gamma_{n+1})))=w^{\mu_{n}} (f (\gamma)) = w^{\mu_{n}} (\delta)
$$
is a closed curve through the point $c_{k_{1}+1, n}=w^{\mu_{n}}(c_{k_{1}+1})$.

From our normalization, it follows that 
\begin{equation}\label{g}
g_{n}(z) =g_{\alpha_{n}, \beta_{n}}(z)=\frac{\alpha_{n} e^{\beta_{n}z}}{(\alpha_{n}-\frac{1}{\alpha_{n}})e^{\beta_{n}z} +\frac{1}{\alpha_{n}}}.
\end{equation}
and $0$ is an omitted value for $g_{n}$.   
Since $\lambda_n=w^{\mu_{n}}(\lambda)$,  it is 
also omitted for $g_{n}$ and
\begin{equation}\label{omit}
\lambda_{n} =\frac{\alpha_{n}}{\alpha_{n}-\frac{1}{\alpha_{n}}}\in P_{n}.
\end{equation}
Because
$$
w^{\mu_{n}}: \hat{\C}\setminus \{0, \lambda\}\to \hat{\C}\setminus \{0, \lambda_{n}\}
$$
is a normalized homeomorphism, it preserves homotopy classes and $\eta=[\delta_{n}] \in \pi_{1}(\hat{\C} \setminus \{0, \lambda_{n}\})={\mathbb Z}$. Thus the homotopy class of $\delta_{n}$ in the space $\hat{\C} \setminus \{0, \lambda_{n}\}$ is the same throughout the iteration. 
\end{proof}
\subsection {Bounded geometry implies compactness}
\label{sec:mainthmpf}

By hypothesis $f$ has bounded geometry and by the normalization of $f$,   $\Omega_f=\{0, \lambda\}$, $f(0)=1$ so that $\{0,1, \lambda, \infty\} \subset P_f$.
Moreover the iterates $$
g_{n}=w^{\mu_{n}}\circ f\circ (w^{\mu_{n+1}})^{-1}
$$
belong to  $\M_2$.

Recall  that $P_{n} =w^{\mu_{n}} (P_{f})$ and because 
   $w^{\mu_n}$ fixes $\{0, 1, \infty\}$ for all $n\geq 0$,
$\{0, 1, \infty\}\subset P_{n}$.
By equation (\ref{g}),
$$
g_{n}(1) =w^{\mu_{n}}(f(1))=  \frac{\alpha_{n} e^{\beta_{n}}}{(\alpha_{n}-\frac{1}{\alpha_{n}})e^{\beta_{n}} +\frac{1}{\alpha_{n}}}\in P_{n}.
$$
so that
$$
\{0, 1, \lambda_{n}, g_{n}(1), \infty\} \subseteq P_{n}.
$$

If $\#(P_f)=3$, then $\lambda =\infty$ and $f(1)=1$.
In this case, $\lambda=\lambda_{n}=\infty$,  $g_{n}(1)=1$ for all $n\geq 0$  and $\#(P_{n})=3$ so that 
  $g_{n} (z)=e^{\beta_{n}z}$. 
The homotopy class of $\delta_{n}$ is always $\eta$, which is its winding number about the origin in the complex analytic sense. Thus $\beta_{n}=2 \pi i\eta$ for all $n$
and $g_{n}=e^{2\pi i \eta z}$, which is the fixed under  Thurston iteration and trivially lies in a compact  subset in $\M2$.

From now on we assume that $\#(P_{f})\geq 4$.
We first prove the compactness of the iterates in  the case that $\lambda=\infty$.  By normalization,  $\lambda_{n} =\infty$ and
$$
g_{n} (z) = e^{\beta_{n} z}
$$ 
for all $n\geq 0$.

Because $f$ has bounded geometry,  $g_{n}(1) \not=1$ has a definite spherical distance from $1$ and the sequence $\{|\beta_{n}|\}$ is bounded from below;  
that is, there is a constant $k>0$ such that
$$
k\leq |\beta_{n}|, \;\; \forall n>0. 
$$

Now we use the topological constraint to prove that the sequence $\{|\beta_{n}|\}$ is also bounded from above.
We have $g_{n}'(z)= \beta_{n} g_{n} (z)$ and the homotopy class of $\delta_{n}$ is the winding number about the origin in the complex analytic  sense, thus
$$
\eta = \frac{1}{2\pi i} \oint_{\delta_{n}} \frac{dw}{w} =\frac{1}{2\pi i} \int_{\gamma_{n}} \frac{g_{n}'(z)}{g_{n} (z)} dz
$$
$$
=\frac{\beta_{n}}{2\pi i} (c_{k_{2}, n+1}-c_{k_{1}, n+1}).
$$  
Both $c_{k_{2}, n+1}, c_{k_{1}, n+1}\in P_{n+1}$, so bounded geometry implies the constant $k>0$ above can be chosen  so that 
$$
|c_{k_{2}, n+1}-c_{k_{1},n+1}|\geq k. 
$$ 
Combining this with the formula for $\eta$ we get
$$
 |\beta_{n}| \leq \frac{\eta}{2\pi |c_{k_{2}, n+1}-c_{k_{1}, n+1}|}\leq \frac{\eta}{2\pi k}. 
$$
 and thus deduce that $\{ g_{n} (z) = e^{\beta_{n} z} \}$ forms a compact subset in $\M2$.

Now let us prove  compactness of the iterates when $\lambda\not=\infty$. 
In this case, since
$$
\lambda_{n} =\frac{\alpha_{n}}{\alpha_{n}-\frac{1}{\alpha_{n}}}\in P_{n+1}
$$
has a definite spherical distance from $0$, $1$, and $\infty$, bounded geometry implies there are  two constants $0< k<K < \infty$ such that
$$ 
k\leq |\alpha_{n}|, \;\;|\alpha_{n} -1| \leq K, \quad \forall \; n\geq 0.
$$
In this case, we have that $g_{n} (1)\not=1$ too. Since $g_{n} (1) \in P_{n+1}$, bounded geometry implies that the constant $k$ can be chosen  such that
$$
k\leq |\beta_{n}|, \;\; \forall n>0.
$$

Again we use the topological constraint to prove that $\{|\beta_{n}|\}$ is also bounded from above. 
Let 
$$
M_{n} (z) =\frac{\alpha_{n} z}{(\alpha_{n}-\frac{1}{\alpha_{n}})z +\frac{1}{\alpha_{n}}}
$$
so that $g_{n}(z) = M_{n} (e^{\beta_{n}z})$.
The map $M_{n}: \hat{\C}\setminus \{ 0, \infty\} \to \hat{\C}\setminus \{0, \lambda_{n}\}$ is a homeomorphism so it induces an isomorphism from the fundamental group $\pi_{1}(\hat{\C}\setminus \{ 0, \infty\})$ to the fundamental group  $\pi_{1}(\hat{\C}\setminus \{ 0, \lambda_{n}\})$. Thus, $\eta$ is the homotopy class $[\tilde{\delta}_{n}]$ where $\tilde{\delta}_{n}= M_{n}^{-1} (\delta_{n})$.

 Note that $\tilde{\delta}_{n}$ is the image of $\gamma_{n+1}$ under $\widetilde{g}_{n}(z)=e^{\beta_{n} z}$. Since 
$\widetilde{\delta}_{n}$ is a closed curve in $\hat{\C}\setminus \{ 0, \infty\}$, $\eta$ is the winding number of $\tilde{\delta}_{n}$ about the origin  in the complex analytic sense,  and we can compute
$$
\eta = \frac{1}{2\pi i} \oint_{\widetilde{\delta_{n}}} \frac{dw}{w} =\frac{1}{2\pi i} \int_{\gamma_{n+1}} \frac{\tilde{g}_{n}'(z)}{\widetilde{g}_{n} (z)} dz=\frac{\beta_{n}}{2\pi i} (c_{k_{2}, n+1}-c_{k_{1}, n+1}).
$$  
As above,  $c_{k_{2}, n+1}, c_{k_{1}, n+1}\in P_{n+1}$,  and by bounded geometry there is a constant $k>0$ such that 
$$
|c_{k_{2}, n+1}-c_{k_{1},n+1}|\geq k,
$$ 
 so that 
$$
 |\beta_{n}| \leq \frac{\eta}{2\pi |c_{k_{2}, n+1}-c_{k_{1}, n+1}|}\leq \frac{\eta}{2\pi k}. 
$$
This inequality proves that $\{ g_{n} (z) = g_{\alpha_{n}, \beta_{n}}(z)\}$ forms a compact subset in $\M2$.

Finally, we  have shown that in all cases  the sequence $\{g_{n}\}$ is a compact subset in $\M2$. This combined with the proof in section~\ref{sec:suff} completes the proof of sufficiency in our main theorem.

\vspace{30pt}

Tao Chen, Department of Mathematics, CUNY Graduate
School, New York, NY 10016. Email: chentaofdh@gmail.com

Yunping Jiang, Department of Mathematics, Queens College of CUNY,
Flushing, NY 11367 and Department of Mathematics, CUNY Graduate
School, New York, NY 10016. Email: yunping.jiang@qc.cuny.edu

Linda Keen, Department of Mathematics and Computer Science, Lehman College, CUNY, Bronx NY 10468 and
 Department of Mathematics, CUNY Graduate
School, New York, NY 10016. Email: LKeen@gc.cuny.edu

\end{document}